\UseRawInputEncoding
\documentclass[12pt, reqno]{amsart}
\usepackage[margin=1in]{geometry}
\usepackage{amssymb,latexsym,amsmath,amscd,amsfonts}
\usepackage{latexsym}
\usepackage[mathscr]{eucal}
\usepackage{bm}
\usepackage{mathptmx}
\usepackage{amssymb}
\usepackage{amsthm}
\usepackage{dcolumn}
\usepackage[all]{xy}
\usepackage{enumitem}
\usepackage[utf8]{inputenc}

\def \qed {\hfill \vrule height6pt width 6pt depth 0pt}
\def\textmatrix#1&#2\\#3&#4\\{\bigl({#1 \atop #3}\ {#2 \atop #4}\bigr)}
\def\dispmatrix#1&#2\\#3&#4\\{\left({#1 \atop #3}\ {#2 \atop #4}\right)}
\newcommand{\beg}{\begin{equation}}
	\newcommand{\eeg}{\end{equation}}
\newcommand{\ben}{\begin{eqnarray*}}
	\newcommand{\een}{\end{eqnarray*}}

\newcommand{\C}{\mathbb C}
\newcommand{\R}{\mathbb R}

\newcommand{\T}{\mathbb T}

\newcommand{\E}{\mathbb E}
\newcommand{\He}{\mathbb H}
\newcommand{\Pe}{\mathbb P}
\newcommand{\D}{\mathbb D}
\newcommand{\G}{\mathbb G}

\newcommand{\Gg}{\mathbb G_2}

\newcommand{\lm}{\lambda}
\newcommand{\diag}{\text{diag}}
\newcommand{\penta}{\text{penta}}
\newcommand{\scal}{\text{scalar}}
\newcommand{\hexa}{\text{hexa}}
\newcommand{\hp}{{\text{HS}}^\perp}
\newtheorem{thm}{Theorem}[section]
\newtheorem{cor}[thm]{Corollary}
\newtheorem{lem}[thm]{Lemma}

\newtheorem{prop}[thm]{Proposition}
\numberwithin{equation}{section} \theoremstyle{definition}

\newtheorem{eg}[thm]{Example}

\def\textmatrix#1&#2\\#3&#4\\{\bigl({#1 \atop #3}\ {#2 \atop #4}\bigr)}
\def\dispmatrix#1&#2\\#3&#4\\{\left({#1 \atop #3}\ {#2 \atop #4}\right)}

\begin{document}
	
	\title{Rigidity of the structured singular value and applications} 
	\author{SOURAV PAL AND NITIN TOMAR}
	
	\address[Sourav Pal]{Mathematics Department, Indian Institute of Technology Bombay,
		Powai, Mumbai - 400076, India.} \email{sourav@math.iitb.ac.in}
	
	\address[Nitin Tomar]{Mathematics Department, Indian Institute of Technology Bombay, Powai, Mumbai-400076, India.} \email{tomarnitin414@gmail.com}

\keywords{Structured singular value, $\mu$-synthesis, Spectral radius, Numerical radius, Operator norm}	

\subjclass[2020]{15A60, 47A12, 93D21}

\begin{abstract}
The structured singular value $\mu_E$ for a linear subspace $E$ of $M_n(\mathbb C)$ is defined by 
\[
\mu_E(A)=1\slash\inf\{\|X\| \ : \ X \in E, \ \det(I_n-AX)=0 \}  \quad (A
\in M_n(\mathbb{C})),
\]
and $\mu_E(A)=0$ if there is no $X \in E$ with $\det(I_n-AX)=0$. It is well-known that $\mu_E(A)$ coincides with the spectral radius $r(A)$ when $E=\{cI_n: c \in \mathbb C \}$ and $\mu_E(A)=\|A\|$ when $E=M_n(\mathbb C)$, for all $A\in M_n(\mathbb C)$. Also, for any linear subspace $E$ satisfying $\{cI_n: c \in \mathbb C \} \subseteq E \subseteq M_n(\mathbb C)$, we have $r(A)\leq \mu_E(A) \leq \|A\|$. We prove that if $E=\{cI_n: c \in \mathbb C \}$ and $F$ is any linear subspace of $M_n(\mathbb C)$ containing $E$, then $\mu_E=\mu_F$ if and only if $E=F$. We prove the exact same rigidity theorem for the linear subspace consisting of the diagonal matrices of order $n$. On the contrary, when $E=M_n(\mathbb C)$, we show that there is a proper subspace $F$ of $M_n(\mathbb C)$, viz. the space of symmetric matrices such that $\mu_E=\mu_F=$ operator norm. Further, we characterize all linear subspaces $F\subseteq M_n(\mathbb C)$ such that $\mu_F$ coincides with the operator norm. Next, we show that in general there is no subspace $E$ of $M_n(\mathbb C)$ such that $\mu_E=$ the numerical radius, not even for $M_2(\mathbb C)$. Then, we prove that except for the spectral radius and operator norm, $\mu_E$ cannot be a convex combination of the spectral radius, numerical radius and operator norm for any $E\subseteq M_2(\mathbb C)$.
The unit ball of the function $\mu_E$ induces various domains in $\mathbb C^d$ depending on $E$, e.g., the symmetrized bidisc $\mathbb{G}_2$, the tetrablock $\mathbb{E}$, the pentablock $\mathbb{P}$ and the hexablock $\mathbb{H}$. The domains $\mathbb{G}_2$ and $\mathbb{E}$ correspond to the linear subspaces of scalar and diagonal matrices in $M_2(\mathbb C)$ respectively, whereas $\mathbb{P}$ and $\mathbb{H}$ arise from certain linear subspaces of upper triangular matrices in $M_2(\mathbb C)$. For each of these linear subspaces, we establish the rigidity of the corresponding structured singular value. 
\end{abstract}	
	
	\maketitle

\section{Introduction} 	
\noindent The structured singular value is one of the central notions in the theory of robust control, where it plays a key role in analyzing the stability of systems under structured uncertainties. Given a linear subspace $E$ of the space of complex matrices $M_n(\mathbb{C})$, the
\textit{structured singular value} $\mu_E: M_n(\C)\rightarrow \mathbb R$ is defined as
\begin{equation} \label{eqn:NEW-01}
\mu_E(A):=\frac{1}{\inf\{\|X\| \ : \ X \in E, \ \det(I_n-AX)=0 \}} \, ,  \quad A
\in M_n(\mathbb{C}).
\end{equation} 
In case there is no $X \in E$ with $\det(I_n-AX)=0$, $\mu_E(A)$ is defined to be equal to $0$. The linear subspace $E$ is referred to as the `structure'. The structured singular value can be defined in a more general setting of $m \times n$ matrices, e.g., see the last section of \cite{Abouhajar}. However, we restrict our attention to $M_n(\C)$ in this article. It is well-known that for all $A\in M_n(\C)$, $\mu_E(A)=r(A)$ when $E$ is the space of scalar matrices $\{cI_n:\, c\in \C  \}$ and $\mu_E(A)=\|A\|$ when $E=M_n(\C)$. Here, $r(A)$, $\|A\|$ are the spectral radius and operator norm of a matrix $A$ respectively and $I_n$ is the identity matrix in $M_n(\C)$. Also, for any subspace $E$ satisfying $\{ cI_n:\,c \in \C \} \subseteq E \subseteq M_n(\C)$, we have $r(A)\leq \mu_E(A) \leq \|A\|$ for any $A \in M_n(\C)$. See \cite{Packard} for proofs to these results. Most of the classical literature on structured singular value (e.g., see \cite{Doyle, DoyleII, Packard} and the references therein), especially in robust control theory, focuses on the structure $E$ comprising of block diagonal matrices whose diagonal blocks are either scalar matrices or diagonal matrices or unstructured $m \times m$ matrices that are referred to as full blocks. More precisely,
\[
E=\{\text{diag}[\delta_1I_{r_1}, \dotsc, \delta_kI_{r_k}, \Delta_{k+1}, \dotsc, \Delta_{k+\ell}]: \delta_i \in \C, \ \Delta_{k+j} \in M_{m_j}(\C), 1\leq i \leq k, 1 \leq j \leq \ell \},
\]
where $\diag[A_1, \dotsc, A_m]$ is the block-diagonal matrix with $A_j$ on the diagonal. Beyond its role in robust control, the structured singular value leads to an interesting interpolation problem which is known as the $\mu$-synthesis. For a linear subspace $E \subseteq M_n(\C)$, the goal of $\mu$-synthesis is the following: given distinct points $\lambda_1, \dotsc, \lambda_m$ in the open unit disc $\D$ and matrices $A_1, \dotsc, A_m \in M_n(\C)$, to find a necessary and sufficient condition such that there is an analytic map $F: \D \to M_n(\C)$ satisfying $F(\lm_j)=A_j$ for $1\leq j \leq m$, together with the constraint
$
\mu_E(F(\lm)) \leq 1$ for all $\lm \in \D$. As mentioned before, $\mu_E$ coincides with the operator norm when $E=M_n(\C)$ and the corresponding $\mu$-synthesis reduces to the classical interpolation into the norm unit ball of $M_n(\C)$. As is well-known, an interpolating function $F$ exists in this case if and only if the Pick matrix
\[
\begin{bmatrix}
(1-\overline{\lm}_i\lm_j)^{-1}(I-A_i^*A_j)
\end{bmatrix}_{i, j=1}^m
\]
is positive semi-definite, e.g., see Chapter X in \cite{Foias_Frazho} and Chapter XVIII in \cite{Ball}. Also, if $E=\{\alpha I_n : \alpha \in \C\}$, then the concerned $\mu_E$-synthesis becomes the spectral interpolation. A reader is referred to \cite{AglerI, Agler2004, Bercovici, Costara2005_II, Costara2005} for a detailed study of this case.

\smallskip 

The aim of this article is to study the rigidity of the structured singular value $\mu_E$ for different linear subspaces $E$ of $M_n(\C)$ and its interaction with a few domains in $\C^d$ that are in correspondence with the $\mu_E$-unit balls of some of those subspaces. In Theorem \ref{thm:sec2-01}, we prove that if $E$ is the subspace of scalar matrices in $M_n(\C)$, then $\mu_E$ is rigid in the sense that for any linear subspace $F\subseteq M_n(\C)$ containing $E$, $\mu_E=\mu_F$ if and only if $E=F$. We then consider the subspace of diagonal matrices and prove a similar rigidity result in Theorem \ref{thm:sec2-02}, which is to say that, if $E_{\diag} \subset M_n(\C)$ is the space of diagonal matrices and $\mathcal F$ is any subspace of $M_n(\C)$, then $\mu_{E_{\diag}}=\mu_{\mathcal F}$ if and only if $E_{\diag}=\mathcal F$. However, the same is not true when $E=M_n(\C)$. As we have mentioned earlier that $\mu_E(A)=\|A\|$ for all $A\in M_n(\C)$ when $E=M_n(\C)$. In Section \ref{sec_op}, we show the existence of a proper linear subspace of $M_n(\C)$ that induces the operator norm. Indeed, the space of all symmetric matrices is one such subspace. Then we characterize all linear subspace $E$ such that $\mu_E$ is equal to the operator norm. Next, we turn our attention to the numerical radius which is defined as
\[
w: M_n(\C) \to [0, \infty), \quad w(A):=\sup\{|x^*Ax| \ : \ x \in \C^n, \|x\|=1\}.
\] 
It is well-known that the numerical radius defines a norm on $M_n(\C)$, which is equivalent to the operator norm, and $r(A) \leq w(A) \leq \|A\|$ for all $A \in M_n(\C)$. Since the spectral radius and norm are induced by the scalar matrices and $M_n(\C)$ respectively, a natural question is triggered: for $n \geq 2$, does there exist a linear subspace $E \subseteq M_n(\C)$ such that $\mu_E(A)=w(A)$ for all $A \in M_n(\C)$? In Section \ref{sec-04-new}, we show that that the answer to this is not affirmative even for $n=2$. In fact, more is true. In the same section we prove that there is no linear subspace $E \subseteq M_2(\C)$ for which $\mu_E$ is a convex combination of the spectral radius, numerical radius and operator norm, except when $\mu_E$ coincides with either spectral radius or operator norm.

\smallskip

The $\mu_E$-unit balls corresponding to different linear subspace $E$ of $M_n(\C)$ give rise to different domains, see \cite{Hexablock} for a detailed discussion on this. For example, the symmetrized bidisc \cite{AglerYoung, AglerIII}, the tetrablock \cite{Abouhajar}, the pentablock \cite{AglerIV} and the hexablock \cite{Hexablock}, which are denoted by $\G_2$, $\mathbb E$, $\mathbb P$ and $\mathbb H$, respectively are induced by four different linear subspaces of $M_2(\C)$. Let us show how a domain is generated by a $\mu_E$-unit ball. Suppose $E$ is the linear subspace of $M_2(\C)$ consisting of diagonal matrices, i.e., $E=\left\{\begin{pmatrix}
z_1 & 0 \\
0 & z_2
\end{pmatrix} : z_1, z_2 \in \mathbb{C} \right\}$. It is shown in \cite{Abouhajar} that the tetrabolck \begin{equation}\label{eqn:NEW-02}
\mathbb{E}=\{(x_1, x_2, x_3) \in \mathbb{C}^3 :
1-x_1z_1-x_2z_2+x_3z_1z_2 \ne 0 \ \text{for all } \, z_1, z_2 \in \overline{\mathbb D}\}
\end{equation}
is originated in the associated $\mu_E$-unit ball. Indeed, for $X=\begin{pmatrix}
z_1 & 0\\
0& z_2
\end{pmatrix} \in E$ and $A=(a_{ij})_{i,j=1}^2 \in M_2(\mathbb{C})$, we have that
$\|X\|=\max\{|z_1|, |z_2|\}$ and $\det(I-AX)=1-a_{11}z_1-a_{22}z_2+\det(A)z_1z_2$. Thus, if we denote the $\mu_E$ of this case by $\mu_{\text{tetra}}$, then it follows from Equation-(\ref{eqn:NEW-01}) that
\begin{align*}
\mu_{\text{tetra}}(A)<1
& \iff \|X\|>1 \ \text{for all} \ X \in E \ \text{with} \ \det(I-AX)=0\\
& \iff \det(I-AX)\ne 0 \ \text{for all} \ X \in E \ \text{with} \ \|X\| \leq 1\\
& \iff 1-a_{11}z_1-a_{22}z_2+\det(A)z_1z_2\ne 0 \ \text{for all} \ z_1, z_2 \in \overline{\mathbb{D}}\\
& \iff (a_{11}, a_{22}, \det(A)) \in \mathbb{E}.
\end{align*}
Thus, $A=(a_{ij})_{i,j=1}^2$ is in the $\mu_E$-unit ball if and only if the point $(a_{11}, a_{22}, \det(A))$ belongs to the tetrablock $\mathbb E$ and consequently, one can write
\[
\E=\{\pi_\E(A): A \in M_2(\C), \ \|A\|<1\}=\{\pi_\E(A): A \in M_2(\C), \ \mu_{\text{tetra}}(A)<1\},
\]
where $\pi_\E(A)=(a_{11},a_{22},\det(A))$. It turns out that an interpolation problem between $\D$ and the $\mu_E$-unit ball is equivalent to a similar interpolation problem between $\D$ and the tetrablock $\mathbb E$,  e.g., see the last Section of \cite{Abouhajar}. Evidently, it is easier to deal with a bounded domain like $\mathbb E \subset \C^3$ than the norm-unbounded object, the $\mu_E$-unit ball in $\C^4$. 

Similarly, when $E$ consists of scalar matrices in $M_2(\C)$, i.e., $E=\left\{\begin{pmatrix}
z & 0 \\
0 & z
\end{pmatrix} : z \in \mathbb{C} \right\}$, the structured singular value $\mu_E$ coincides with the spectral radius and the $\mu_E$-unit ball leads to the symmetrized bidisc $\Gg$, which is given by
\[
\Gg=\{\Pi_2(A): A \in M_2(\C), \ \|A\|<1\}=\{\Pi_2(A): A \in M_2(\C), \ r(A)<1\},
\]
where $\Pi_{2}(A)=(\operatorname{tr}(A),\det(A))$. When $E=\left\{\begin{pmatrix}
z & w \\
0 & z
\end{pmatrix} : z, w \in \mathbb{C} \right\}$, the space of upper triangular matrices with same diagonals entries in $M_2(\C)$, the corresponding structured singular value $\mu_{\text{penta}}$ yields the pentablock $\Pe$ as follows:
\[
\Pe=\{\pi_{\Pe}(A): A \in M_2(\C), \ \|A\|<1\}=\{\pi_{\Pe}(A): A \in M_2(\C), \ \mu_{\text{penta}}(A)<1\},
\]
where $\pi_{\Pe}(A)=(a_{21}, \operatorname{tr}(A), \det(A))$ for $A=(a_{ij}) \in M_2(\C)$. When $E$ consists of all upper triangular matrices in $M_2(\C)$, then the associated structured singular value $\mu_{\text{hexa}}$ gives rise to the hexablock $\mathbb{H}$. To construct the hexablock, the authors of \cite{Hexablock} study the sets
\begin{equation*}
	\He_\mu=\{\pi_{\He}(A)  :  A \in M_2(\C), \  \mu_{\text{hexa}}(A)<1 \} \quad \text{and} \quad \He_N=\{\pi_{\He}(A) : A \in M_2(\C), \|A\|<1\},
\end{equation*}
where $\pi_{\He}: M_2(\C) \to \C^4, \ A \mapsto (a_{21}, a_{11}, a_{22}, \det(A))$ for $A=(a_{ij}) \in M_2(\C)$. The hexablock is obtained in the following way: 
\[
\He = int(\overline{\He}_\mu)=int(\widehat{\overline{\He}_N}), 
\]
where  $\widehat{\overline{\He}_N}$ is the polynomial convex hull of $\overline{\He}_N$. We refer to \cite{AglerYoung}-\cite{Agler2008} and \cite{Tirtha_Pal, Costara2004, Costara2005, NikolovIII, sourav14, pal-shalit, PZ} for a further reading on function theoretic, geometric and operator theoretic aspects of $\G_2$. The tetrablock $\E$ also has a rich literature, e.g. see \cite{Abouhajar, Alsalhi, Tirtha, Kosi, zwonek1, Young, Zwonek} and the references therein. An interested reader is directed to \cite{AglerIV, JindalII, KosinskiII, PalN, Guicong, GuicongII} for an elaborative discussion on the pentablock $\Pe$. More recently, the hexablock has been studied well in \cite{Hexablock, Pal_Hexa, Hexa_Su}. 

\smallskip 

In Section \ref{sec_rigid}, we prove the rigidity of the underlying structured singular values that induce the domains $\Gg, \E, \Pe$ and $\He$.

\subsection*{Notations.} Throughout the paper, we use the following notations:
\begin{align*}
	 (i) E_{\text{scalar}}&=\{zI_n: z \in \C\}, 
	\quad 	\qquad \qquad \qquad 
	(ii) \ \ E_{\text{diag}}=\{\text{diag}(z_1, \dotsc, z_n): z_1, \dotsc, z_n \in \C \}, \\
		(iii) \ E_{\text{penta}}&=\left\{ \begin{pmatrix}
			z & w \\
			0 & z
		\end{pmatrix} : z, w \in \C  \right\},
	\quad 	\qquad 
	(iv) \  E_{\text{hexa}}=\left\{ \begin{pmatrix}
		z_1 & w \\
		0 & z_2
	\end{pmatrix} : z_1, z_2, w \in \C  \right\}  \\
	(v) \ E_{\text{symm}}&=\{X \in M_n(\C): X=X^t\},
	\quad 	\qquad 
		(vi) \quad \  E_\theta=\left\{\begin{pmatrix}
		z_1 & w \\
		e^{i\theta} w & z_2
	\end{pmatrix} : z_1, z_2, w \in \C \right\},
\end{align*}
where $\theta \in \R$. The zero matrix of order $p \times q$ is denoted by $\Theta_{p \times q}$, and $\Theta_{p}=\Theta_{p\times p}$. For the identity matrix in $M_n(\C)$, we write $I_n$ or simply $I$. The standard orthonormal basis of $\C^n$ is denoted by $\{e_1, \dotsc, e_n\}$. Let $E_{ij} \in M_n(\C)$ denote the matrix with $(i,j)$-th entry $1$ and $0$ elsewhere.

	\section{Rigidity results for classical structured subspaces}\label{sec_rigid}

\noindent The main aim of this section is to establish the rigidity of structured singular value $\mu_E$ when $E \subseteq M_n(\C)$ is the subspace of scalar matrices or diagonal matrices, which is to say that for linear subspaces $E, E'$ of $M_n(\C)$ with $E_{\text{scalar}} \subseteq E, E'$, we prove that 
\begin{enumerate}
	\item  $\mu_E(A)=r(A)$ for all $A \in M_n(\C)$ if and only if $E=E_{\text{scalar}}$ ; \item $\mu_{E'}(A)=\mu_{E_\diag}(A)$ for all $A \in M_n(\C)$ if and only if $E'=E_\diag$.
\end{enumerate}
 Also, following the discussion of the previous section on the domains $\Gg, \E, \Pe$ and $\He$, we prove the rigidity of the associated structured singular values $r(\cdot), \mu_{\text{tetra}}, \mu_{\text{penta}},$ and $\mu_{\text{hexa}}$, respectively. More precisely, we study the following problems: to classify all linear subspaces $E \subseteq M_2(\C)$ containing the identity matrix for which, for every $A \in M_2(\C)$, one of the following holds:
\[
\mu_E(A)=r(A), \quad \mu_E(A)=\mu_{\text{tetra}}(A), \quad 
\mu_E(A)=\mu_{\text{penta}}(A), \quad \text{or} \quad 
\mu_E(A)=\mu_{\text{hexa}}(A).
\]
We provide a complete solution to these problems and show that each one admits a unique solution. We begin with the following result showing the structured singular values $r(\cdot), \mu_\diag, \mu_\penta$ and $\mu_\hexa$ are all distinct functions on $M_2(\C)$.

\begin{prop}\label{prop_mu}
	For $A \in M_2(\C)$, we have
	\[
	r(A) \leq \mu_{\text{diag}}(A) \leq \mu_{\text{hexa}}(A) \leq \|A\| \quad \text{and} \quad r(A) \leq  \mu_{\text{penta}}(A) \leq \mu_{\text{hexa}}(A) \leq \|A\|.
	\]
	Moreover, the functions $r(.), \ \mu_{\diag}, \ \mu_{\penta}, \ \mu_{\hexa}$ and $\|.\|$ are all distinct from each other. In particular, one can choose $A, B, C \in M_2(\C)$ such that 
	\begin{enumerate}
		\item $r(A)<\mu_{\penta}(A) < \mu_{\diag}(A)=\mu_{\hexa}(A)$; \smallskip 
		\item $r(B)=\mu_{\diag}(B) < \mu_{\penta}(B)=\mu_{\hexa}(B)$; \smallskip 
		\item $\mu_{\hexa}(C)<\|C\|$.\smallskip 
	\end{enumerate}
\end{prop}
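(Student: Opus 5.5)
The inequalities come from monotonicity of $\mu_E$ in $E$. If $E\subseteq F$, then the set $\{X\in E:\det(I-AX)=0\}$ is contained in the corresponding set for $F$. So the infimum of $\|X\|$ over it can only decrease when we pass to $F$, which gives $\mu_E(A)\le\mu_F(A)$. This also covers the conventions $\mu=0$ when the set is empty and infimum $=\infty$. We have the chains $E_{\scal}\subseteq E_{\diag}\subseteq E_{\hexa}\subseteq M_2(\C)$ and $E_{\scal}\subseteq E_{\penta}\subseteq E_{\hexa}$. Combining these with $\mu_{E_{\scal}}=r$ and $\mu_{M_2(\C)}=\|\cdot\|$ (recalled in the introduction) gives both chains of inequalities. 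Distinctness then follows once we exhibit $A,B,C$, because (1)–(3) separate every pair of the five functions.

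To compute, I will use the identity $\det(I-AX)=1-\operatorname{tr}(AX)+\det A\det X$ for $2\times 2$ matrices. For $B$ I take $B=\begin{pmatrix}0&0\\1&0\end{pmatrix}$. For $X=\begin{pmatrix}z_1&w\\0&z_2\end{pmatrix}$ we get $BX=\begin{pmatrix}0&0\\z_1&w\end{pmatrix}$, hence $\det(I-BX)=1-w$.
\begin{itemize}
\item On diagonal $X$ we have $w=0$, so the determinant never vanishes. Hence $\mu_{\diag}(B)=0=r(B)$.
\item On $E_{\penta}$ and $E_{\hexa}$ we need $w=1$, which forces $\|X\|\ge|w|=1$. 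The choice $X=E_{12}$ attains this, so $\mu_{\penta}(B)=\mu_{\hexa}(B)=1$.
\end{itemize}

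For $C$ I take $C=E_{12}$. Then $CX=\begin{pmatrix}0&z_2\\0&0\end{pmatrix}$ is nilpotent, so $\det(I-CX)=1$ for every upper triangular $X$. Thus $\mu_{\hexa}(C)=0<1=\|C\|$.

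For $A$ I take the nilpotent matrix $A=\begin{pmatrix}1&1\\-1&-1\end{pmatrix}$, which has $r(A)=0$ and $\|A\|=2$. Since $\det A=0$, the determinant condition reduces to $\operatorname{tr}(AX)=1$.
\begin{itemize}
\item On $E_{\diag}$ the condition reads $z_1-z_2=1$. The minimum of $\max(|z_1|,|z_2|)$ under this constraint is $1/2$, attained at $z_1=1/2$, $z_2=-1/2$. So $\mu_{\diag}(A)=2$.
\item On $E_{\penta}$ the condition reads $-w=1$, so $\|X\|\ge1$, with equality at $X=-E_{12}$. So $\mu_{\penta}(A)=1$.
\item On $E_{\hexa}$ the condition is $z_1-z_2-w=1$. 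The sandwich $\mu_{\diag}(A)\le\mu_{\hexa}(A)\le\|A\|$ already proved gives $2\le\mu_{\hexa}(A)\le 2$, so $\mu_{\hexa}(A)=2$ with no further optimization needed.
\end{itemize}
This yields $0<1<2=2$, which is (1).

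The only genuinely nontrivial step is the exact value of $\mu_{\hexa}(A)$, i.e., minimizing the norm of a non-normal upper triangular matrix under a linear constraint. Choosing $A$ with $\mu_{\diag}(A)=\|A\|$ sidesteps that minimization entirely via the sandwich. So the work reduces to checking $\|A\|=2$, which holds because $A^*A=2\begin{pmatrix}1&1\\1&1\end{pmatrix}$ has largest eigenvalue $4$.
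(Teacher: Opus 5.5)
Your proof is correct and follows essentially the same route as the paper: the inequalities come from monotonicity of $\mu_E$ in $E$, and you use the same witnesses $B=E_{21}$ and $C=E_{12}$. Your $A$ is the paper's nilpotent matrix up to a harmless sign, since $\mu_E(-A)=\mu_E(A)$. Your observation that $\mu_{\hexa}(A)=2$ follows directly from $\mu_{\diag}(A)=\|A\|=2$ and the sandwich is also what the paper ultimately relies on. This makes the paper's intermediate bound $\mu_{\hexa}(A)\geq\sqrt{2}$ unnecessary.
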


\begin{proof}
	For linear subspaces $E_1, E_2$ satisfying $E_1 \subseteq E_2 \subseteq M_n(\C)$, we have $\mu_{E_1}(A) \leq \mu_{E_2}(A) \leq \|A\|$ for all $A \in M_n(\C)$. Since $E_{\text{scalar}} \subseteq E_{\text{diag}}, \ E_{\text{penta}} \subseteq E_{\text{hexa}} \subseteq M_2(\C)$ and $\mu_{E_{\text{scalar}}}(.)=r(.)$, we have
	\[
	r(A) \leq \mu_{\text{diag}}(A) \leq \mu_{\text{hexa}}(A) \leq \|A\| \qquad \text{and} \qquad r(A) \leq  \mu_{\text{penta}}(A) \leq \mu_{\text{hexa}}(A) \leq \|A\|
	\]
	for all $A \in M_2(\C)$. Let us define
	\[
	A=\begin{pmatrix}
		-1 & -1 \\
		1 & 1\\
	\end{pmatrix}, \quad B=\begin{pmatrix}
		0 & 0 \\
		1 & 0
	\end{pmatrix} \quad \text{and} \quad C=\begin{pmatrix}
		0 & 1 \\
		0 & 0
	\end{pmatrix}.
	\]
	It is not difficult to see that $\sigma(A)=\{0\}$ and so, $r(A)=0$. Let $D=\diag(d_1, d_2) \in E_{\diag}$. A simple calculation gives that $\det(I-AD)=1+d_1-d_2$ and so, we have
	\[
	\frac{1}{\mu_{\diag}(A)}=\inf_{d_1, d_2 \in \C}\|\diag(d_1, d_1+1)\| \leq \|\diag(-1\slash 2, 1\slash 2)\|=\frac{1}{2} \quad \text{and so,} \quad \mu_{\diag}(A) \geq 2.
	\]
	Since $\|A\|=2$, we have $\mu_{\diag}(A)=2$. For $X=\begin{pmatrix}
		z & w \\
		0 & z
	\end{pmatrix}$, it follows that $\det(I-AX)=1-w$. Then
	\[
	\frac{1}{\mu_{\penta}(A)}=\inf\left\{\left\|\begin{pmatrix}
		z & 1 \\
		0 & z
	\end{pmatrix}\right\|: z \in \C  \right\} \leq \left\|\begin{pmatrix}
		0 & 1 \\
		0 & 0
	\end{pmatrix}\right\|=1 \quad \text{and so,} \quad \mu_{\penta}(A) \geq 1.
	\]
	For $X=\begin{pmatrix}
		z & 1 \\
		0 & z
	\end{pmatrix}$, we have $\|X\| \geq 1$ since $\|Xe_2\|=\sqrt{1+|z|^2} \leq \|X\|$. Consequently, 
	\[
	\frac{1}{\mu_{\penta}}(A)=\inf\{\|X\|: X \in E_{\penta}, \det(I-AX)=0\} \geq 1 \quad \text{and so,} \quad \mu_{\penta}(A)=1.
	\]
	For $X=\begin{pmatrix}
		z_1 & w \\
		0 & z_2
	\end{pmatrix}$, a routine computation gives that $\det(I-AX)=1+z_1-z_2-w$. Then
	\[
	\frac{1}{\mu_{\hexa}(A)}=\inf_{z_1, z_2 \in \C} \left\|\begin{pmatrix}
		z_1 & 1+z_1-z_2 \\
		0 & z_2
	\end{pmatrix}\right\| \leq \left\|\begin{pmatrix}
		0 & 1\slash 2 \\
		0 & 1\slash 2
	\end{pmatrix}\right\|=\frac{1}{\sqrt{2}} \quad \text{and so,} \quad \mu_{\hexa}(A) \geq \sqrt{2}.
	\]
	Consequently, $r(A)<\mu_{\penta}(A) < \mu_{\diag}(A)=\mu_{\hexa}(A)=\|A\|=2$. Evidently,  $\|B\|=1, r(B)=0$ and $\det(I-BD)=1$ for all $D \in E_{\diag}$. Hence, $\mu_{\diag}(B)=0$. For $X=\begin{pmatrix}
		z & w \\
		0 & z
	\end{pmatrix}$, it is easy to see that $\det(I-BX)=1-w$. One can easily verify that
	\[
	\frac{1}{\mu_{\penta}(B)}=\inf\left\{\left\|\begin{pmatrix}
		z & 1 \\
		0 & z
	\end{pmatrix}\right\|: z \in \C  \right\}=1 \quad \text{and so,} \quad \mu_{\penta}(B) = 1=\|B\|.
	\]
	Since $\mu_{\penta}(B) \leq \mu_{\hexa}(B)\leq \|B\|$, we have $r(B)=\mu_{\diag}(B) < \mu_{\penta}(B)=\mu_{\hexa}(B)=\|B\|$. Clearly, $\|C\|=1$. Let $X=\begin{pmatrix}
		z_1 & w\\
		0 & z_2
	\end{pmatrix} \in E_{\hexa}$. Then $\det(I-CX)=1$ and so, $\mu_{\hexa}(C)=0$, which is strictly lesser that $\|C\|$. Consequently, $ \mu_{\hexa}(C)<\|C\|$, which gives the desired conclusion.
\end{proof}

We now present our first rigidity result associated with the spectral radius. We mention here that the rank one matrices play a crucial role in its proof. The authors of \cite{Talha} provided an explicit formula for the structured singular values of rank one matrices in $M_n(\C)$.

\begin{thm} \label{thm:sec2-01}
	For a linear subspace $E$ of $M_n(\C)$ with $E_{\text{scalar}} \subseteq E$, the following are equivalent:
	\begin{enumerate}
		\item $\mu_E(A)=r(A)$ for all $A \in M_n(\C)$;
		\item $E=E_{\text{scalar}}$.
	\end{enumerate} 
\end{thm}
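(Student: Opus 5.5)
The implication (2)$\Rightarrow$(1) is the classical fact recalled in the introduction: $\mu_{E_{\text{scalar}}}=r$. So I will concentrate on (1)$\Rightarrow$(2), argued by contrapositive. Assume $E_{\text{scalar}}\subsetneq E$ and fix $X_0\in E\setminus E_{\text{scalar}}$. The goal is a matrix $A$ with $\mu_E(A)>r(A)$. As the remark before the theorem suggests, I will look for a nilpotent rank one matrix $A=uv^*$ with $v^*u=0$. Then $r(A)=0$, and it suffices to find $X\in E$ with $\det(I-AX)=0$, since that forces $\mu_E(A)\geq 1/\|X\|>0$.

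For $A=uv^*$ the matrix determinant lemma gives $\det(I-uv^*X)=1-v^*Xu$. Hence it is enough to find vectors $u,v$ with $v^*u=0$ and $v^*X_0u\neq 0$. Rescaling $X_0$ by the scalar $1/(v^*X_0u)$, which keeps it in $E$ because $E$ is a linear subspace, then yields $X\in E$ with $v^*Xu=1$, that is, $\det(I-AX)=0$.

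The key linear-algebra step is therefore the following: if $X_0$ is not a scalar matrix, there exist $u$ and $v$ with $v\perp u$ and $v^*X_0u\neq 0$. This amounts to saying that some vector $u$ has $X_0u\notin \operatorname{span}\{u\}$. If instead $X_0u\in \operatorname{span}\{u\}$ for every $u$, then every nonzero vector is an eigenvector of $X_0$, and the standard argument shows $X_0$ is scalar. Indeed, for independent $e_i,e_j$, the eigenvalues at $e_i$, $e_j$ and $e_i+e_j$ must all agree. Once $u$ is found, I take $v$ to be the component of $X_0u$ orthogonal to $u$, which is nonzero. Then $v^*u=0$ and $v^*X_0u=\|v\|^2\neq 0$, as required.

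Putting this together, $A=uv^*$ satisfies $r(A)=0<1/\|X\|\leq\mu_E(A)$, which contradicts (1). I do not expect any step to be a real obstacle. The only point requiring care is the "every vector is an eigenvector implies scalar" lemma, and that is routine.
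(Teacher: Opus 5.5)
Your argument is correct. It uses the same mechanism as the paper's proof: nilpotent rank-one test matrices $A=uv^*$ with $v^*u=0$, together with $\det(I-uv^*X)=1-v^*Xu$. The organization is different, though.

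The paper argues in two stages with explicit basis choices. First, if some $X\in E$ has a nonzero off-diagonal entry $x_{qp}$, it takes $A=e_pe_q^t$ (your construction with $u=e_p$, $v=e_q$), which forces $E\subseteq E_{\text{diag}}$. Second, for a non-scalar $V=\text{diag}(v_1,\dots,v_n)\in E$ with $v_i\neq v_{i+1}$, it places the block $A_0=\begin{pmatrix}-1&-1\\ 1&1\end{pmatrix}$ in rows and columns $i,i+1$. This block is $uv^t$ with $u=e_{i+1}-e_i$ and $v=e_i+e_{i+1}$. It then exhibits $X_{\alpha\beta}=\alpha I+\beta V$ with $\det(I-AX_{\alpha\beta})=0$.

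Your lemma replaces both stages with one basis-free step: a non-scalar matrix has a vector $u$ that is not an eigenvector, and you take $v$ to be the component of $X_0u$ orthogonal to $u$. It also shows that the scalar part $\alpha I$ in the paper's construction plays no role, since $v^*u=0$ means adding a scalar matrix does not change $v^*Xu$.

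Each version has an advantage. The paper's test matrices are concrete and supported on at most a $2\times 2$ block. Its first stage is reused essentially verbatim in the later rigidity theorems for $E_{\text{diag}}$, $E_{\text{penta}}$ and $E_{\text{hexa}}$. Yours is shorter and isolates the one property of $E_{\text{scalar}}$ actually used: its elements are exactly the matrices for which every nonzero vector is an eigenvector.
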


\begin{proof}
	The part $(2) \implies (1)$ is a well-known fact in control theory. We prove $(1) \implies (2)$. Assume that $\mu_E(A)=r(A)$ for all $A \in M_n(\C)$. Suppose $X=(x_{ij})_{i, j=1}^n \in E$ has a non-zero off diagonal entry $x_{qp }$ for $1 \leq p, q \leq n$ with $p \ne q$. Consider $A=e_pe_q^t$, which is a matrix with $1$ at $pq$-th position and zero elsewhere. Since $A$ is a nilpotent matrix, $r(A)=0$. Let $\displaystyle Y=\frac{1}{x_{qp}}X, u=e_p$ and $v^t=e_q^tY$. Clearly, $Y \in E$ with $\|Y\|>0$ and $e_q^tYe_p$ is the $qp$-th entry of $Y$, i.e., $e_q^tYe_p=1$. Then 
	\[
	\det(I-AY)=\det(I-e_pe_q^tY)=\det(I-uv^t)=1-v^tu=1-e_q^tYe_p=0
	\]
	and so, there exists $Y \in E$ with $\det(I-AY)=0$. Consequently, we have
	\[
	\frac{1}{\mu_E(A)}=\inf\{\|Z\|: Z \in E, \det(I-AZ)=0\} \leq \|Y\| \quad \text{and so,} \quad \mu_E(A) \geq \|Y\|^{-1}>0,
	\]
	a contradiction as $\mu_E(A)=r(A)$ and $r(A)=0$. Thus, no matrix in $E$ has a non-zero off-diagonal entry and so, $E_{\text{scalar}} \subseteq E \subseteq E_{\text{diag}}$. Let if possible, $E \ne E_{\text{scalar}}$. Since $E_{\text{scalar}}=\text{span}\{\text{diag}(1, \dotsc, 1)\}$, one can find $v=(v_1, \dotsc, v_n)^t \in \C^n$ such that $v$ and $(1, \dotsc, 1)^t$ are linearly independent vectors, and $S=\text{span}\left\{\text{diag}(1, \dotsc, 1), \text{diag}(v_1, \dotsc, v_n) \right\} \subseteq E$. Moreover, there exists $i \in \{1, \dotsc, n-1\}$ such that $v_i \ne v_{i+1}$. Choose
	\[
	\alpha=v_i\left(1-\frac{1}{v_{i+1}-v_i}\right) \quad \text{and} \quad \beta=\frac{1}{v_{i+1}-v_i}.
	\]
	It is not difficult to see that $\alpha+\beta v_i=v_i$ and $\alpha+\beta v_{i+1}=1+v_i$. Consider the matrix given by
	\[
	X_{\alpha\beta}=\text{diag}(\alpha+\beta v_1, \dotsc, \alpha+\beta v_n)=\text{diag}(\alpha+\beta v_1, \ \alpha+\beta v_{i-1}, \ v_i, \ 1+v_i, \ \alpha+\beta v_{i+2}, \dotsc, \alpha+\beta v_n).
	\]
	Evidently, $X_{\alpha\beta}=\alpha \ \text{diag}(1, \dotsc, 1)+\beta \ \text{diag}(v_1, \dotsc, v_n)$ and so, $X_{\alpha \beta} \in S$. Moreover, $X_{\alpha \beta} \ne 0$. Consider the block matrix given by 
	\[
	A=\begin{pmatrix}
		\Theta_{(i-1 )\times (i-1)} & \Theta_{(i-1) \times 2} & \Theta_{(i-1) \times (n-1-i)} \\
		\Theta_{2 \times (i-1)}  & A_0 & \Theta_{2 \times (n-1-i)} \\
		\Theta_{(n-1-i) \times (i-1)} & \Theta_{(n-1-i) \times 2} & \Theta_{(n-1-i)\times (n-1-i)}
	\end{pmatrix}, \quad \text{where} \quad A_0=\begin{pmatrix}
		-1 & -1\\
		1 & 1
	\end{pmatrix}.
	\]
	Clearly, $\sigma(A) \subseteq \sigma(A_0) \cup \{0\}=\{0\}$ since $\sigma(A_0)=\{0\}$, and thus $r(A)=0$. Let $V=\text{diag}(v_i, 1+v_i)$. A straightforward computation gives that
	\[
	I-A_0V=\begin{pmatrix}
		1+v_i & 1+v_i\\
		-v_i & -v_i
	\end{pmatrix}
	\quad \text{and} \quad I-AX_{\alpha\beta}=\begin{pmatrix}
		I_{i-1 } & \Theta_{(i-1) \times 2} & \Theta_{(i-1) \times (n-1-i)} \\
		\Theta_{2 \times (i-1)}  & I-A_0V & \Theta_{2 \times (n-1-i)} \\
		\Theta_{(n-1-i) \times (i-1)} & \Theta_{(n-1-i) \times 2} & I_{n-1-i}
	\end{pmatrix}.
	\]
	Hence, $\det(I-AX_{\alpha\beta})=\det(I-A_0V)=0$. Putting everything together, we have $S \subseteq E$,
	\begin{align*}
		\frac{1}{\mu_S(A)}=\inf\{\|X\|: X \in S, \det(I-AX)=0\} \leq \|X_{\alpha\beta}\|	\quad \text{and so,} \quad \mu_E(A) \geq \mu_S(A) \geq \|X_{\alpha \beta}\|^{-1}>0,
	\end{align*}
	which is a contradiction as $\mu_E(A)=r(A)$ and $r(A)=0$. Therefore, $E=E_{\text{scalar}}$. 
\end{proof}

The following lemma can be considered an elementary result in linear algebra. We include a short proof here for the sake of completeness.

\begin{lem}\label{lem_ED1}
If $E$ is a proper subspace of $E_{\text{diag}}$, then there exists $W=\text{diag}(w_1, \dotsc, w_n)$ such that $w_1, \dotsc, w_n \in \T$ and $W \notin E$.
\end{lem}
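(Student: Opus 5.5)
The plan is to argue by contradiction, using the fact that the diagonal unitaries span all of $E_{\text{diag}}$. Identify $E_{\text{diag}}$ with $\C^n$ via $\text{diag}(z_1,\dotsc,z_n)\mapsto (z_1,\dotsc,z_n)^t$, so that $E$ becomes a proper linear subspace of $\C^n$. Suppose, to the contrary, that every $W=\text{diag}(w_1,\dotsc,w_n)$ with $w_1,\dotsc,w_n\in\T$ lies in $E$. Since $E$ is a linear subspace, it then contains the linear span of all such $W$. The goal is to show that this span is all of $E_{\text{diag}}$, which contradicts properness.

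To show the span is everything, I would exhibit each $E_{jj}$ as a linear combination of diagonal unitaries. For a fixed $j$, let $W_+=I_n$ and let $W_-$ be the diagonal matrix with $-1$ in the $j$-th position and $1$ elsewhere. Both matrices have all diagonal entries in $\T$. Their difference satisfies $\frac12(W_+-W_-)=E_{jj}$, so $E_{jj}\in E$ for every $j$. Since $\{E_{11},\dotsc,E_{nn}\}$ spans $E_{\text{diag}}$, we get $E=E_{\text{diag}}$, which is the desired contradiction.

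An alternative, more dual, route works the same way. Since $E$ is proper, there is a nonzero linear functional $\phi(\text{diag}(z_1,\dotsc,z_n))=\sum c_i z_i$ that vanishes on $E$. One then checks that $\phi$ cannot vanish on all of $\T^n$, for instance by taking $\phi(I_n)$ and $\phi(W_-)$ as above and subtracting. However, the direct spanning argument is cleaner.

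There is no real obstacle here. The only point that needs care is checking that $W_-$ indeed has unimodular diagonal entries, which is immediate since $|{-1}|=1$.
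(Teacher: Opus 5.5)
Your argument is correct, but it takes a different route from the paper.

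The paper argues dually. It picks a nonzero $P=\text{diag}(p_1,\dotsc,p_n)$ orthogonal to $E$ in the inner product $\langle Z,W\rangle=\sum_j z_j\overline{w}_j$. It then aligns phases, setting $w_j=p_j/|p_j|$ when $p_j\neq 0$ and $w_j=1$ otherwise, so that $\langle W,P\rangle=\sum_j|p_j|>0$ and hence $W\notin E$.

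You instead observe that diagonal unitaries span $E_{\text{diag}}$, since $E_{jj}=\frac12\bigl(I_n-(I_n-2E_{jj})\bigr)$, so they cannot all lie in a proper subspace. This needs no inner product and gives a slightly stronger conclusion: $W$ can be taken from the fixed list $I_n, I_n-2E_{11},\dotsc,I_n-2E_{nn}$, with entries in $\{\pm 1\}$, independently of $E$.

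The paper's construction instead produces a $W$ adapted to the annihilator of $E$, with $\langle W,P\rangle$ equal to the $\ell^1$-norm of $P$. This is the same $\ell^\infty$--$\ell^1$ extremal phase choice used in Lemma \ref{lem_ED2}. However, the proof of Theorem \ref{thm:sec2-02} only needs some unimodular diagonal $W\notin E$, so your version works equally well there.

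Your sketched dual alternative is also fine: $\phi(I_n)-\phi(I_n-2E_{jj})=2c_j$, and some $c_j\neq 0$ because $\phi\neq 0$.
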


\begin{proof}
	We have a natural identification of $E_{\text{diag}}$ with $\C^n$ via  $\text{diag}(z_1, \dotsc, z_n) \mapsto (z_1, \dotsc, z_n)^t$. So, there is an inner product structure on $E_{\text{diag}}$ given by
	$
	\langle Z, W \rangle=\overset{n}{\underset{j=1}{\sum}}z_j\overline{w}_j  
	$ 
	for $Z=\text{diag}(z_1, \dotsc, z_n)$ and $W=\text{diag}(w_1, \dotsc, w_n)$ in $E_{\text{diag}}$. Since $E$ is a proper subspace of $E_{\text{diag}}$, there is a non-zero $P=\text{diag}(p_1, \dotsc, p_n) \in E_{\text{diag}}$ such that $\langle Z, P \rangle=0$ for all $Z \in E$. Consider the linear functional given by $\xi: E_{\text{diag}} \to \C, \ \xi(Z)=\langle Z, P\rangle$ for all $Z \in E_{\text{diag}}$. Evidently, $\xi(E)=\{0\}$. Suppose $p_j=|p_j|e^{i\theta_j}$ for $1 \leq j \leq n$. Define 
	$
	w_j=\begin{cases}
		e^{i\theta_j}, & p_j \ne 0\\
		1, & p_j = 0\\
	\end{cases} \ $ for $1 \leq  j \leq n$. For $W=\text{diag}(w_1, \dotsc, w_n)$, we have
	\[
	\xi(W)=\langle W, P \rangle =\overset{n}{\underset{j=1}{\sum}}w_j\overline{p}_j= \underset{j: p_j \ne 0}{\sum}e^{i\theta_j}|p_j|e^{-i\theta_j}=\overset{n}{\underset{j=1}{\sum}}|p_j|>0
	\]
	and so, $W \notin E$. The proof is now complete.
\end{proof}

The next lemma appears in a more general form in Section 3 of \cite{Talha}. We give an alternative proof to the particular case considered here.

\begin{lem}\label{lem_ED2}
	For $u=(u_1, \dotsc, u_n)^t, v=(v_1, \dotsc, v_n)^t \in \C^n$ with $\overset{n}{\underset{j=1}{\sum}}|u_jv_j| \ne 0$,
	$
	\mu_{\text{diag}}(uv^*)=\overset{n}{\underset{j=1}{\sum}}|u_jv_j|.
	$  
\end{lem}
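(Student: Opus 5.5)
We compute $\mu_{\text{diag}}(uv^*)$ directly from the definition, using that $uv^*$ has rank one. For $X=\text{diag}(z_1,\dotsc,z_n)\in E_{\text{diag}}$, the determinant identity $\det(I-ab^t)=1-b^ta$ used in the proof of Theorem \ref{thm:sec2-01} gives
\[
\det(I-uv^*X)=1-v^*Xu=1-\sum_{j=1}^n \overline{v}_j z_j u_j .
\]
Hence the set of admissible $X$ is exactly $\{\text{diag}(z_1,\dotsc,z_n): \sum_j \overline{v}_ju_jz_j=1\}$. Since $\|X\|=\max_j|z_j|$, the task reduces to the scalar extremal problem
\[
\frac{1}{\mu_{\text{diag}}(uv^*)}=\inf\Big\{\max_j |z_j| \ : \ \sum_{j=1}^n \overline{v}_j u_j z_j=1\Big\}.
\]
The hypothesis $\sum_j|u_jv_j|\ne 0$ guarantees that this constraint set is nonempty.

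\textbf{Lower bound on the infimum.} If $\sum_j \overline{v}_ju_jz_j=1$, the triangle inequality gives
\[
1\le \sum_j |u_jv_j||z_j|\le \max_j|z_j|\sum_j|u_jv_j|.
\]
Hence $\|X\|\ge (\sum_j|u_jv_j|)^{-1}$, that is, $\mu_{\text{diag}}(uv^*)\le \sum_j|u_jv_j|$.

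\textbf{Attaining the bound.} We choose unimodular phases in the spirit of Lemma \ref{lem_ED1}. Write $\overline{v}_ju_j=|u_jv_j|e^{i\theta_j}$, and set $w_j=e^{-i\theta_j}$ when $u_jv_j\neq0$ and $w_j=1$ otherwise. Put $s=\sum_j|u_jv_j|>0$ and take $z_j=w_j/s$. Then $\sum_j\overline{v}_ju_jz_j=s/s=1$ and $\|X\|=1/s$. The infimum therefore equals $1/s$, and $\mu_{\text{diag}}(uv^*)=s$.

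There is no real obstacle here. The only step needing care is checking that the rank-one determinant identity applies with $b^t=v^*X$, since the same identity was already used in the proof of Theorem \ref{thm:sec2-01}.
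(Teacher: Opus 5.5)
Your proposal is correct and follows the paper's proof essentially step for step. Both use the rank-one identity $\det(I-uv^*X)=1-v^*Xu$ to reduce to the constraint $\sum_j \overline{v}_j u_j z_j=1$, bound $\max_j|z_j|\ge 1/s$ from below by the triangle inequality, and attain the bound with phase-aligned entries of modulus $1/s$; your choice $z_j=1/s$ rather than $0$ on indices with $u_jv_j=0$ makes no difference.
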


\begin{proof}
	Let $A=uv^*$ and $X=\text{diag}(x_1, \dotsc, x_n)$. Define $x=(x_1, \dotsc, x_n)^t$ and $c=(c_1, \dotsc, c_n)^t$ with $c_j=\overline{u}_jv_j$. Clearly, $\|c\|_1=\overset{n}{\underset{j=1}{\sum}}|u_jv_j|>0$. Since $\det(I-xy^t)=1-y^tx$ for vectors $x, y \in \C^n$, we have 
	\begin{align}\label{eqn_001}
		\det(I-AX)=\det(I-uv^*X)=1-v^*Xu=1-\overset{n}{\underset{j=1}{\sum}}x_ju_j\overline{v}_j=1- \langle x, c \rangle.
	\end{align}
	If $\det(I-AX)=0$, then $
	1=|\langle x, c\rangle|=\left|\overset{n}{\underset{j=1}{\sum}}x_ju_j\overline{v}_j \right| \leq \overset{n}{\underset{j=1}{\sum}}|x_j| \ |u_j\overline{v}_j|  \leq \|x\|_\infty \|c\|_1$ and so, $1 \slash \|c\|_1 \leq \|x\|_\infty$.	Thus, $1\slash \|c\|_1 \leq \inf\left\{\|X\|: X \in E_{\text{diag}}, \det(I-AX)=0 \right\}$. Let $c_j=|c_j|e^{i\theta_j}$ for $1 \leq j \leq n$. Choose $w=(w_1, \dotsc, w_n)$ given by
	$w_j=\begin{cases}
		\displaystyle \frac{e^{i\theta_j}}{\|c\|_1}, & c_j \ne 0\\
		0, & c_j = 0\\
	\end{cases}$ for $1 \leq  j \leq n$. Note that 
	$
	\|w\|_{\infty}=1\slash \|c\|_1$ and $\langle w, c \rangle=\underset{c_j \ne 0}{\sum}w_j\overline{c}_j=\frac{1}{\|c\|_1}\underset{c_j \ne 0}{\sum}|c_j|=1$. For $W=\text{diag}(w_1, \dotsc, w_n) \in E_{\text{diag}}$, we have by \eqref{eqn_001} that $\det(I-AW)=0$ and 
	$
	1\slash \|c\|_1 \leq \inf\left\{\|X\|: X \in E_{\text{diag}}, \ \det(I-AX)=0 \right\} \leq \|W\|=\|w\|_\infty=1\slash \|c\|_1.
	$ 
	Thus, $\mu_{\text{diag}}(A)=\|c\|_1$, which completes the proof.
\end{proof}

We are now in a position to present our second rigidity result for the structured singular value $\mu_\diag$ corresponding to the subspace $E_\diag$, which is the space of all diagonal matrices in $M_n(\C)$.

\begin{thm} \label{thm:sec2-02}
	For a linear subspace $E$ of $M_n(\C)$, the following are equivalent:
	\begin{enumerate}
		\item $\mu_E(A)=\mu_{\text{diag}}(A)$ for all $A \in M_n(\C)$;
		\item $E=E_{\text{diag}}$.
	\end{enumerate} 
\end{thm}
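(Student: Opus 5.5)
The direction $(2)\Rightarrow(1)$ is trivial. For $(1)\Rightarrow(2)$, assume $\mu_E=\mu_{\diag}$ on $M_n(\C)$. The plan has two halves: first show $E\subseteq E_{\diag}$, then show $E$ cannot be a proper subspace of $E_{\diag}$. Note that, unlike Theorem \ref{thm:sec2-01}, we do not assume $E_{\text{scalar}}\subseteq E$, so the argument should not rely on that.

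\emph{Step 1: $E\subseteq E_{\diag}$.} This mimics the first half of the proof of Theorem \ref{thm:sec2-01}. Suppose some $X\in E$ has a nonzero off-diagonal entry $x_{qp}$ with $p\neq q$. Take $A=e_pe_q^t$ and $Y=X/x_{qp}\in E$. As before, $\det(I-AY)=1-e_q^tYe_p=0$, so $\mu_E(A)\geq \|Y\|^{-1}>0$.

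On the other hand, for every diagonal $D$ we have $\det(I-AD)=1-e_q^tDe_p=1$, since $p\ne q$. Hence $\mu_{\diag}(A)=0$. (Equivalently, Lemma \ref{lem_ED2} gives $\sum_j|u_jv_j|=0$ here, and the defining convention gives $0$.) This contradicts $\mu_E=\mu_{\diag}$, so $E\subseteq E_{\diag}$.

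\emph{Step 2: $E=E_{\diag}$.} Suppose instead that $E$ is a proper subspace of $E_{\diag}$. By Lemma \ref{lem_ED1}, there is $W=\diag(w_1,\dots,w_n)$ with all $w_j\in\T$ and $W\notin E$. Set $u=(1,\dots,1)^t$ and $v=(\overline{w}_1,\dots,\overline{w}_n)^t/n$, and let $A=uv^*$. By Lemma \ref{lem_ED2}, $\mu_{\diag}(A)=\sum_j|u_jv_j|=1$.

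Now compute $\mu_E(A)$. For diagonal $X=\diag(x)$, equation \eqref{eqn_001} gives $\det(I-AX)=1-\frac1n\sum_j x_jw_j$. If this vanishes, then
\[
1=\Big|\frac1n\sum_j x_jw_j\Big|\leq \|x\|_\infty .
\]
Equality $\|x\|_\infty=1$ forces $|x_j|=1$ for all $j$ and all the $x_jw_j$ to share a common argument, which must be $0$. So $x_jw_j=1$, i.e.\ $x_j=\overline{w}_j$.

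The subtlety here is that $W$ itself might not be the extremal matrix; the extremal diagonal matrix is $\overline W=\diag(\overline{w}_1,\dots,\overline{w}_n)$. The fix is to apply Lemma \ref{lem_ED1} so as to obtain a unimodular diagonal matrix not in $E$, call it $W'$, and then build $v$ from the conjugates of its entries. With that choice, the unique minimizer of $\|X\|$ over diagonal $X$ with $\det(I-AX)=0$ is exactly $W'\notin E$.

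It remains to pass from "the unique norm-$1$ point is excluded" to "$\mu_E(A)<1$". The feasible set $\{X\in E:\det(I-AX)=0\}$ is the intersection of $E$ with an affine hyperplane. If it is empty, then $\mu_E(A)=0$. Otherwise it is a nonempty closed set, so the infimum of $\|X\|$ over it is attained by a closed-and-bounded (compactness) argument. That minimum is at least $1$ by the bound above, and it cannot equal $1$ because the only diagonal matrix achieving norm $1$ is $W'\notin E$. Hence the infimum is strictly greater than $1$, so $\mu_E(A)<1=\mu_{\diag}(A)$, a contradiction. This closes the argument.

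The main obstacle is this last step: making sure the extremal matrix is unique and lies outside $E$, and handling attainment of the infimum. The uniqueness comes from the equality case of the triangle inequality.
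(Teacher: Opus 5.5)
Your Step 1 is exactly the paper's argument. The plan of Step 2 is also the paper's: a rank-one $A$, Lemma \ref{lem_ED2}, the equality case of the triangle inequality, and attainment of the infimum. The problem is a conjugation slip at the decisive point, and your ``fix'' does not repair it.

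With $v=(\overline{w}_1,\dots,\overline{w}_n)^t/n$ you correctly get $\det(I-AX)=1-\frac1n\sum_j w_jx_j$. Its unique norm-one diagonal zero is therefore $\overline{W}$. But Lemma \ref{lem_ED1} only guarantees $W\notin E$, and $\overline{W}$ may lie in $E$. Concretely, take $n=2$ and $E=\text{span}\{\diag(1,i)\}$:
\begin{itemize}
\item the construction in Lemma \ref{lem_ED1} produces, up to a unimodular scalar, $W=\diag(i,1)\notin E$;
\item yet $\overline{W}=\diag(-i,1)=-i\,\diag(1,i)\in E$;
\item consequently $\mu_E(A)=1=\mu_{\diag}(A)$, and there is no contradiction.
\end{itemize}
Your remedy is to take a unimodular $W'\notin E$ and again build $v$ from the conjugates of its entries. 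That is the same construction with the matrix renamed. By your own computation the minimizer is then $\overline{W'}$, not $W'$, so the claim that the unique minimizer is $W'\notin E$ is false as written.

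The repair is one symbol: take $v=(w_1,\dots,w_n)^t/n$, with no conjugates. Then $\det(I-AX)=1-\frac1n\sum_j\overline{w}_jx_j$, and equality in $1\le\|x\|_\infty$ forces $x_j=w_j$, that is, $X=W\notin E$. Alternatively, apply Lemma \ref{lem_ED1} to $\overline{E}=\{\overline{X}:X\in E\}$.

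With that change your argument is correct and is essentially the paper's. The paper uses $B=uw^*=nA$, so $\mu_{\diag}(B)=n$. It takes a minimizer $X\in E$ with $\|X\|=1/n$ and uses the same equality cases, plus Cauchy--Schwarz, to get $X=W/n$, hence $W\in E$. You argue contrapositively: the unique diagonal minimizer lies outside $E$, so the attained minimum over $E$ exceeds $1$. Your version also treats the empty-feasible-set case explicitly.
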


\begin{proof}
	The part $(2) \implies (1)$ is trivial. Suppose $\mu_E(A)=\mu_{\text{diag}}(A)$ for all $A \in M_n(\C)$. Let if possible, there exists $X=(x_{ij})_{i, j=1}^n \in E$ having a non-zero off diagonal entry $x_{qp }$ for $1 \leq p, q \leq n$ with $p \ne q$. Consider $A=e_pe_q^t$, which is a matrix with $1$ at $pq$-th position and zero elsewhere. Let $\displaystyle Y=\frac{1}{x_{qp}}X, u=e_p$ and $v^t=e_q^tY$. Clearly, $Y \in E$ with $\|Y\|>0$ and $e_q^tYe_p$ is the $qp$-th entry of $Y$, i.e., $e_q^tYe_p=1$. Then 
	\begin{align}\label{eqn_002}
		\det(I-AY)=\det(I-e_pe_q^tY)=\det(I-uv^t)=1-v^tu=1-e_q^tYe_p=0
	\end{align}
	and so, there exists $Y \in E$ with $\det(I-AY)=0$. Consequently, we have
	\[
	\frac{1}{\mu_E(A)}=\inf\{\|Z\|: Z \in E, \ \det(I-AZ)=0\} \leq \|Y\| \quad \text{and so,} \quad \mu_E(A) \geq \|Y\|^{-1}>0.
	\]
	For any $D \in E_{\text{diag}}$, we have that $e_q^tDe_p=0$. Now, applying the same arguments as in \eqref{eqn_002} gives
	$
	\det(I-AD)=\det(I-e_pe_q^tD)=1-e_q^tDe_p=1
	$
	and so, $\mu_{\text{diag}}(A)=0$. This is a contradiction since $\mu_E(A)=\mu_{\text{diag}}(A)$ and $\mu_E(A)>0$. Therefore, $E \subseteq E_{\text{diag}}$.

	\medskip 
	
	Let if possible, $E$ be a proper subspace of $E_{\text{diag}}$. It follows from Lemma \ref{lem_ED1} that there exists $W=\text{diag}(w_1, \dotsc, w_n)$ such that $w_1, \dotsc, w_n \in \T$ and $W \notin E$. Choose $u=(1, \dotsc, 1)^t$ and $w=(w_1, \dotsc, w_n)^t$. Note that $\overset{n}{\underset{j=1}{\sum}}|u_jw_j|=n$. For $B=uw^*$, we have by Lemma \ref{lem_ED2} that $\mu_{\text{diag}}(B)=\overset{n}{\underset{j=1}{\sum}}|u_jw_j|=n$ and so, $\mu_E(B)=n$. 
	Hence, $\inf\{\|X\|: X \in E, \det(I-BX)=0\}=1\slash n$. By compactness arguments, there exists $X \in E$ with $\det(I-BX)=0$ such that $\|X\|=1\slash n$. Let $X=\text{diag}(x_1, \dotsc, x_n)$ and $x=(x_1, \dotsc, x_n)^t$. Then $\|X\|=\|x\|_\infty=1 \slash n=1\slash \|w\|_1$. Applying similar arguments as in \eqref{eqn_001}, we have 
	\[
	\det(I-BX)=1-\overset{n}{\underset{j=1}{\sum}}x_j\overline{w}_j=1-\langle x, w\rangle \quad \text{and thus,} \quad \langle x, w\rangle=1.
	\] 
	So, $1=\left|\langle x, w\rangle \right|=\|x\|_\infty \|w\|_1$. Let if possible, there exist $k \in \{1, \dotsc, n\}$ such that $|x_k| <\|x\|_\infty$. Then 
	\[
	\|x\|_\infty \|w\|_1=\left|\langle x, w\rangle \right| \leq \overset{n}{\underset{j=1}{\sum}}|x_j| \ |\overline{w}_j|=\overset{n}{\underset{j=1}{\sum}}|x_j| <n\|x\|_\infty=\|x\|_\infty \|w\|_1,
	\]
	which is a contradiction. So, $x=(x_1, \dotsc, x_n)^t$ satisfies $|x_j|=\|x\|_\infty=1\slash n$ for $1 \leq j \leq n$. Note that 
	\[
	\|x\|_2=\left(\overset{n}{\underset{j=1}{\sum}}|x_j|^2\right)^{1\slash 2}=\frac{1}{\sqrt{n}} \quad \text{and} \quad \|w\|_2=\left(\overset{n}{\underset{j=1}{\sum}}|w_j|^2\right)^{1\slash 2}=\sqrt{n}.
	\]
	Thus, $|\langle x, w \rangle| =\|x\|_2\|w\|_2=1$. By Cauchy-Schwarz inequality, $x$ and $w$ are linearly dependent vectors, i.e., $w=ax$ for some $a \in \C \setminus \{0\}$. Consequently, $W=aX$ and thus, $W \in E$ as $X \in E$. This gives a contradiction to the fact that $W \notin E$. Hence, $E=E_{\text{diag}}$ and the proof is now complete.
\end{proof}

We now present our rigidity results for the structured singular values $\mu_\penta$ and $\mu_\hexa$  associated with the linear subspaces of $M_2(\C)$ given by
\[
E_{\text{penta}}=\left\{\begin{pmatrix}
	z & w\\
	0 & z
\end{pmatrix}: z, w  \in \C \right\} \quad \text{and} \quad E_{\text{hexa}}=\left\{\begin{pmatrix}
z_1 & w\\
0 & z_2
\end{pmatrix}: z_1, z_2, w  \in \C \right\},
\] 
respectively. The proof to these rigidity theorems share the similar arguments with each other.

\begin{thm}
	For a linear subspace $E$ of $M_2(\C)$ with $E_{\text{scalar}} \subseteq E$, the following are equivalent:
	\begin{enumerate}
		\item $\mu_E(A)=\mu_{\text{penta}}(A)$ for all $A \in M_2(\C)$;
		\item $E=E_{\text{penta}}$.
	\end{enumerate} 
\end{thm}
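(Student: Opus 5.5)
We prove $(1)\implies(2)$; the converse is trivial. Assume $\mu_E=\mu_{\penta}$ on $M_2(\C)$. The approach has two stages. First we show $E\subseteq E_{\hexa}$, i.e., every $X\in E$ has vanishing $(2,1)$-entry. Then, using $E_{\text{scalar}}\subseteq E\subseteq E_{\hexa}$ and a dimension count, we show $E=E_{\penta}$.

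\textbf{Step 1: $E\subseteq E_{\hexa}$.} We repeat the rank-one argument from Theorem \ref{thm:sec2-01}. Suppose some $X\in E$ has $x_{21}\neq 0$, and take $A=e_1e_2^t$. For $Y=X/x_{21}\in E$ we get $\det(I-AY)=1-e_2^tYe_1=0$, so $\mu_E(A)>0$. For $Z=\begin{pmatrix} z & w\\ 0& z\end{pmatrix}\in E_{\penta}$ we get $\det(I-AZ)=1-e_2^tZe_1=1$, so $\mu_{\penta}(A)=0$. This is a contradiction, hence every $X\in E$ is upper triangular.

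\textbf{Step 2: $E_{\penta}\subseteq E$.} Suppose the nilpotent $N=E_{12}\notin E$. Take $B=E_{21}$, the matrix used in Proposition \ref{prop_mu}. There $\mu_{\penta}(B)=1$, so $\mu_E(B)=1$. For upper triangular $X=\begin{pmatrix} z_1& w\\ 0& z_2\end{pmatrix}$ we have $\det(I-BX)=1-w$. Hence the infimum of $\|X\|$ over $X\in E$ with $w=1$ must equal $1$. Any such $X$ satisfies $\|X\|\ge\|Xe_2\|=\sqrt{1+|z_2|^2}$, and by symmetry also $\|X\|\ge\sqrt{1+|z_1|^2}$ (via $\|X^*e_1\|$). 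By compactness the infimum is attained, so there is $X\in E$ with $w=1$ and $\|X\|=1$; this forces $z_1=z_2=0$, i.e. $X=N\in E$, a contradiction. Thus $E_{\penta}\subseteq E\subseteq E_{\hexa}$.

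\textbf{Step 3: $E\neq E_{\hexa}$.} Since $\dim E_{\hexa}=3$ and $\dim E_{\penta}=2$, the only remaining possibility is $E=E_{\hexa}$, and we must show $\mu_{\penta}\neq\mu_{\hexa}$. This is exactly item (1) of Proposition \ref{prop_mu}: for $A=\begin{pmatrix}-1&-1\\1&1\end{pmatrix}$ we have $\mu_{\penta}(A)=1<2=\mu_{\hexa}(A)$. Therefore $E=E_{\penta}$.

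The main obstacle is Step 2: extracting $N\in E$ from the equality $\mu_E(B)=1$. This requires the compactness argument, namely that the constraint set $\{X\in E: w=1\}$ is closed and the infimum of a continuous function on it is attained. It also requires the sharp norm inequality $\|X\|\ge\max(\sqrt{1+|z_1|^2},\sqrt{1+|z_2|^2})$, which forces both diagonal entries to vanish at the minimiser. Once $N\in E$ is established, the rest reduces to the dimension count and the example in Proposition \ref{prop_mu}.
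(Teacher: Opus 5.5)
Your proof is correct and uses the same ingredients as the paper's: the rank-one test matrix $e_1e_2^t$ to force $E\subseteq E_{\hexa}$, the test matrix $B=E_{21}$ with $\mu_{\penta}(B)=1$ and an attained minimiser whose diagonal is forced to vanish, and Proposition \ref{prop_mu}(1) to separate $\mu_{\penta}$ from $\mu_{\hexa}$. The only difference is organisational: the paper first uses Proposition \ref{prop_mu} to force $\dim E=2$ and then runs a case analysis on $E=\text{span}\{I,M\}$, invoking the $E_{21}$ minimiser only in the case $a\neq c$, $b\neq 0$; you instead read off $E_{12}\in E$ directly from that minimiser, which gives $E_{\penta}\subseteq E$ at once and makes the separate exclusion of $E_{\text{scalar}}$ and $E_{\diag}$ unnecessary.
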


\begin{proof}
	The part $(2) \implies (1)$ is trivial. Now, suppose $\mu_E(A)=\mu_{\text{penta}}(A)$ for all $A \in M_2(\C)$. Suppose $X=(x_{ij})_{i, j=1}^2 \in E$ has the off diagonal entry $x_{21} \ne 0$. Define $A=e_1e_2^t$ and $Y=x_{21}^{-1}X$.  Clearly $Y \in E, \|Y\|>0$ and $e_2^tYe_1=1$. Thus, $\det(I-AY)=\det(1-e_1e_2^tY)=1-e_2^tYe_1=0$. We have
	\begin{align}\label{eqn_103}
		\frac{1}{\mu_E(A)}=\inf\{\|Z\|: Z \in E, \det(I-AZ)=0\} \leq \|Y\| \quad \text{and so,} \quad \mu_E(A)\geq \|Y\|^{-1}>0.
	\end{align}
	Let $Z=\begin{pmatrix}
		z& w\\ 0 & z
	\end{pmatrix} \in E_{\text{penta}}$. Then $\det(I-AZ)=\det(I-e_1e_2^tZ)=1-e_2^tZe_1=1$ and so, $\mu_{\text{penta}}(A)=0$. Therefore, $0=\mu_{\text{penta}}(A)=\mu_E(A)$, a contradiction to \eqref{eqn_103}. Putting everything together, we have 
	\[
	E_{\text{scalar}} \subseteq E \subseteq \left\{\begin{pmatrix}
		z_1 & w\\
		0 & z_2
	\end{pmatrix}: z_1, z_2, w \in \C \right\}=E_{\text{hexa}}.
	\]
	It follows from Proposition \ref{prop_mu} that $E_{\scal} \subsetneq E \subsetneq E_\hexa$. Then $\dim(E)=2$ and so, we can write 
	\[
	E=\text{span}\left\{I=\begin{pmatrix}
		1 & 0\\
		0 & 1
	\end{pmatrix},  \ M=\begin{pmatrix}
		a & b \\
		0 & c
	\end{pmatrix}\right\},
	\]	
	where $M$ and $I$ are linearly independent in $M_2(\C)$. We now discuss two cases depending on $a, b$ and $c$.
	
	\medskip 
	
	\noindent \textbf{Case 1.} Let $a \ne c$ and $b=0$. In this case, we have $E=E_{\diag}$ in $M_2(\C)$ and so, $\mu_{\penta}=\mu_E=\mu_{\diag}$, which contradicts Proposition \ref{prop_mu}.
	
	\medskip 
	
	\noindent \textbf{Case 2.} Let $a \ne c$ and $b \ne 0$. Let $B=\begin{pmatrix}
		0 & 0 \\
		1 & 0
	\end{pmatrix}$. It follows from the proof of Proposition \ref{prop_mu} that $\mu_\penta(B)=1$ and so, $\mu_E(B)=1$. Therefore, $\inf\{\|X\|: X \in E, \det(I-BX)=0\}=1$. By compactness arguments, there exists $X \in E$ with $\det(I-BX)=0$ such that $\|X\|=1$. One can choose $\alpha, \beta \in \C$ such that 
	\[
	X=\alpha I_2 +\beta M=\begin{pmatrix}
		\alpha + a \beta & b \beta \\
		0 & \alpha + c \beta 
	\end{pmatrix}. 
	\]
	A simple calculation shows that
	\[
	I-BX=\begin{pmatrix}
		1 & 0\\
		-\alpha-a\beta & 1-b\beta 
	\end{pmatrix} \quad \text{and so,} \quad \det(I-BX)=1-b \beta =0.
	\]
	Therefore, $\beta=1\slash b$ and $X=\begin{pmatrix}
		\alpha+a\beta & 1\\
		0 & \alpha+c\beta
	\end{pmatrix}$. We have by $\|X\|=1$ that $\alpha+c\beta=0=\alpha+a\beta$ and so, $(a-c)\beta=0$, which is a contradiction.
	
	\medskip 
	
	Therefore, the only remaining possibility is that $a=c$. Clearly $b \ne 0$, otherwise $M=aI_2$ leading to a contradiction. Consequently, $a=c$ and $b\ne 0$. Then
	\[
	E=\left\{\begin{pmatrix}
		z& w \\
		0 & z
	\end{pmatrix}: z, w \in \C\right\},
	\]
	which is the subspace $E_\penta$. The proof is now complete.
\end{proof}

\begin{thm}
	For a linear subspace $E$ of $M_2(\C)$ with $E_{\text{scalar}} \subseteq E$, the following are equivalent:
	\begin{enumerate}
		\item $\mu_E(A)=\mu_{\text{hexa}}(A)$ for all $A \in M_2(\C)$;
		\item $E=E_{\text{hexa}}$.
	\end{enumerate} 
\end{thm}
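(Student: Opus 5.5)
The implication $(2)\implies(1)$ is trivial, so the work is in $(1)\implies(2)$. Assume $\mu_E=\mu_{\hexa}$ on $M_2(\C)$. The plan has two stages: first show $E\subseteq E_{\hexa}$, then show $E$ cannot be a proper subspace of $E_{\hexa}$.

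\textbf{Step 1: $E\subseteq E_{\hexa}$.} Suppose some $X=(x_{ij})\in E$ has $x_{21}\neq 0$. Put $A=e_1e_2^t$ and $Y=x_{21}^{-1}X$. Then
\[
\det(I-AY)=1-e_2^tYe_1=0,
\]
so $\mu_E(A)\ge \|Y\|^{-1}>0$. On the other hand, every $Z\in E_{\hexa}$ has $e_2^tZe_1=0$, so $\det(I-AZ)=1$ and $\mu_{\hexa}(A)=0$. This contradicts the hypothesis, hence $E_{\scal}\subseteq E\subseteq E_{\hexa}$.

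\textbf{Step 2: $E$ is not a proper subspace.} Suppose $E\subsetneq E_{\hexa}$. Since $\mu_{\hexa}\ne r$ by Proposition \ref{prop_mu}, we get $E\neq E_{\scal}$. Therefore $\dim E=2$ and $E=\text{span}\{I,M\}$ with $M=\begin{pmatrix} a&b\\0&c\end{pmatrix}$ linearly independent from $I$. There are three cases.
\begin{enumerate}
\item If $b=0$, then $E=E_{\diag}$, so $\mu_{\diag}=\mu_{\hexa}$. This contradicts item (2) of Proposition \ref{prop_mu}: for $B=\begin{pmatrix}0&0\\1&0\end{pmatrix}$ we have $\mu_{\diag}(B)=0<1=\mu_{\hexa}(B)$.
\item If $a=c$ and $b\ne0$, then $E=E_{\penta}$. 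This contradicts item (1) of Proposition \ref{prop_mu}, which gives a matrix $A$ with $\mu_{\penta}(A)<\mu_{\hexa}(A)$.
\item The remaining case is $a\neq c$ and $b\neq0$. I would reuse the argument of Case 2 of the pentablock theorem. Since $\mu_E(B)=\mu_{\hexa}(B)=1$, compactness gives $X=\alpha I+\beta M\in E$ with $\det(I-BX)=0$ and $\|X\|=1$. As before, $\det(I-BX)=1-b\beta$, so $\beta=1/b$ and the $(1,2)$ entry of $X$ equals $1$. Because the first row of $X$ has norm at most $\|X\|=1$, its $(1,1)$ entry $\alpha+a\beta$ must vanish. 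Because the second column has norm at most $1$, the entry $\alpha+c\beta$ must vanish too. Hence $(a-c)\beta=0$, contradicting $a\neq c$ and $\beta\ne0$.
\end{enumerate}
All three cases are impossible, so $E=E_{\hexa}$.

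The main obstacle is making sure the case split covers everything. The case $b=0$, $a=c$ cannot occur, since it would make $M$ a multiple of $I$. One must also justify the compactness step: the set $\{X\in E:\det(I-BX)=0,\ \|X\|\le 2\}$ is closed and bounded, and it is nonempty because the infimum equals $1$, so the infimum is attained. Apart from this, the argument is a direct adaptation of the pentablock proof, with Proposition \ref{prop_mu} supplying the distinguishing matrices.
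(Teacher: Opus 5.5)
Your proposal is correct and follows essentially the same route as the paper. You first get $E\subseteq E_{\hexa}$ via $A=e_1e_2^t$, and then rule out $\dim E=2$ by three cases: $E=E_{\diag}$ and $E=E_{\penta}$, both contradicting Proposition \ref{prop_mu}, and $a\neq c$, $b\neq 0$, handled by the norm-attaining $X\in E$ for $B$. Your row/column norm bound and your compactness justification spell out details the paper leaves implicit.
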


\begin{proof}
	The part $(2) \implies (1)$ is trivial. Assume that $\mu_E(A)=\mu_{\text{hexa}}(A)$ for all $A \in M_2(\C)$. Suppose $X=(x_{ij})_{i, j=1}^2 \in E$ has the off diagonal entry $x_{21} \ne 0$. Define $A=e_1e_2^t$ and $Y=x_{21}^{-1}X$.  Clearly $Y \in E, \|Y\|>0$ and $e_2^tYe_1=1$. Thus, $\det(I-AY)=\det(1-e_1e_2^tY)=1-e_2^tYe_1=0$. We have
	\begin{align}\label{eqn_104}
		\frac{1}{\mu_E(A)}=\inf\{\|Z\|: Z \in E, \det(I-AZ)=0\} \leq \|Y\| \quad \text{and so,} \quad \mu_E(A)\geq \|Y\|^{-1}>0.
	\end{align}
	Let $Z=\begin{pmatrix}
		z_1& w\\ 0 & z_2
	\end{pmatrix} \in E_{\text{hexa}}$. Then $\det(I-AZ)=\det(I-e_1e_2^tZ)=1-e_2^tZe_1=1$ and so, $\mu_{\text{hexa}}(A)=0$. Therefore, $0=\mu_{\text{hexa}}(A)=\mu_E(A)$, a contradiction to \eqref{eqn_104}. Consequently, we have 
	\[
	E_{\text{scalar}} \subseteq E \subseteq \left\{\begin{pmatrix}
		z_1 & w\\
		0 & z_2
	\end{pmatrix}: z_1, z_2, w \in \C \right\}=E_{\text{hexa}}.
	\]
	It follows from Proposition \ref{prop_mu} that $E_{\scal} \subsetneq E$. Let if possible, $\dim(E)=2$. Then we can write 
	\[
	E=\text{span}\left\{I=\begin{pmatrix}
		1 & 0\\
		0 & 1
	\end{pmatrix},  \ M=\begin{pmatrix}
		a & b \\
		0 & c
	\end{pmatrix}\right\},
	\]	
	where $M$ and $I$ are linearly independent in $M_2(\C)$. We now discuss three cases depending on $a, b$ and $c$.
	
	\medskip 
	
	\noindent \textbf{Case 1.} Let $a \ne c$ and $b=0$. In this case, we have $E=E_{\diag}$ in $M_2(\C)$ and so, $\mu_{\hexa}=\mu_E=\mu_{\diag}$, which contradicts Proposition \ref{prop_mu}.
	
	\medskip 
	
	\noindent \textbf{Case 2.} Let $a \ne c$ and $b \ne 0$. Let $B=\begin{pmatrix}
		0 & 0 \\
		1 & 0
	\end{pmatrix}$. It follows from the proof of Proposition \ref{prop_mu} that $\mu_\hexa(B)=1$ and so, $\mu_E(B)=1$. Therefore, $\inf\{\|X\|: X \in E, \det(I-BX)=0\}=1$. By compactness arguments, there exists $X \in E$ with $\det(I-BX)=0$ such that $\|X\|=1$. One can choose $\alpha, \beta \in \C$ such that 
	\[
	X=\alpha I_2 +\beta M=\begin{pmatrix}
		\alpha + a \beta & b \beta \\
		0 & \alpha + c \beta 
	\end{pmatrix}. 
	\]
	A simple calculation shows that
	\[
	I-BX=\begin{pmatrix}
		1 & 0\\
		-\alpha-a\beta & 1-b\beta 
	\end{pmatrix} \quad \text{and so,} \quad \det(I-BX)=1-b \beta =0.
	\]
	Therefore, $\beta=1\slash b$ and $X=\begin{pmatrix}
		\alpha+a\beta & 1\\
		0 & \alpha+c\beta
	\end{pmatrix}$. We have by $\|X\|=1$ that $\alpha+c\beta=0=\alpha+a\beta$ and so, $(a-c)\beta=0$, which is a contradiction.
	
	\medskip 
	
	\noindent \textbf{Case 3.} Let $a=c$. If $b = 0$, then $M=aI$, which is a contradiction. Hence, $a=c$ and $b \ne 0$. Then
	\[
	E=\left\{\begin{pmatrix}
		z& w \\
		0 & z
	\end{pmatrix}: z, w \in \C\right\}=E_\penta.
	\]
	In this case, $\mu_\penta=\mu_\hexa$, which is a contradiction to Proposition \ref{prop_mu}. 
	
	\medskip 
	
	\noindent The above three cases show that $\dim(E)>2$ and so, $E=E_\hexa$. The proof is now complete.
\end{proof}

\section{The operator norm and structured singular value}\label{sec_op}

\noindent In this section, we consider the problem of identifying the linear subspaces $E \subseteq M_n(\C)$ for which the structured singular value $\mu_E$ coincides with the operator norm. The space $M_n(\C)$ is a well-known example, whereas in contrast to earlier rigidity phenomena, we show that proper subspaces of $M_n(\C)$ with this property also exist. We begin with the following characterization of all subspaces for which the equality $\mu_E=\|.\|$ holds.

\begin{thm}\label{thm_norm}
	For a linear subspace $E$ of $M_n(\C)$, the following are equivalent:
	\begin{enumerate}
		\item $\mu_E(A)=\|A\|$ for all $A \in M_n(\C)$;
		\item for every pair of unit vectors $u, v \in \C^n$, there exists $X \in E$ with $\|X\|=1$ such that $Xu=v$.
	\end{enumerate} 
\end{thm}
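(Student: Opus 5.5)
We prove the two implications separately. The direction $(2)\implies(1)$ uses the singular vectors of $A$ to build a norm-minimal destabilizing $X\in E$. The direction $(1)\implies(2)$ tests condition (1) on rank one matrices and extracts the required $X$ from a minimizer. Throughout, recall that $\mu_E(A)\le \|A\|$ holds for every subspace $E$. Indeed, if $\det(I-AX)=0$, then $1\le r(AX)\le \|A\|\,\|X\|$, so $\|X\|\ge 1/\|A\|$. This bound does not need $E_{\text{scalar}}\subseteq E$.

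\textbf{$(2)\implies(1)$.} Let $A\neq 0$; the case $A=0$ is trivial, since then no $X$ destabilizes and both sides are $0$. Choose unit vectors $x,y$ with $Ax=\|A\|y$, i.e.\ a top singular pair. Apply (2) to the pair $(y,x)$: there is $X_0\in E$ with $\|X_0\|=1$ and $X_0y=x$. Put $X=\|A\|^{-1}X_0\in E$. Then $AXy=\|A\|^{-1}Ax=y$, so $y$ lies in the kernel of $I-AX$ and $\det(I-AX)=0$. Hence $1/\mu_E(A)\le\|X\|=1/\|A\|$, i.e.\ $\mu_E(A)\ge\|A\|$. Combined with the general upper bound, this gives equality.

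\textbf{$(1)\implies(2)$.} Fix unit vectors $u,v$ and set $A=uv^*$, so that $\|A\|=1$ and $\mu_E(A)=1$. For any $X$, the rank one determinant identity gives $\det(I-uv^*X)=1-v^*Xu$. The infimum $\inf\{\|X\|:X\in E,\ v^*Xu=1\}$ equals $1$ and is attained. The set is closed and we may restrict to a compact ball; this is the same compactness argument used in the proof of Theorem \ref{thm:sec2-02}. So there is $X\in E$ with $\|X\|=1$ and $v^*Xu=1$. Now $1=|\langle Xu,v\rangle|\le\|Xu\|\,\|v\|\le\|X\|=1$. Equality in Cauchy--Schwarz forces $Xu=\lambda v$, and then $v^*Xu=1$ gives $\lambda=1$. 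Thus $Xu=v$ with $\|X\|=1$, which is (2).

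The only delicate point is the attainment of the infimum, i.e.\ that the minimizer exists. I expect it to be routine, because the constraint set $\{X\in E: v^*Xu=1\}$ is a closed affine subspace of $E$, so its intersection with a closed ball is compact.
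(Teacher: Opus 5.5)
Your proof is correct and takes essentially the same route as the paper. For $(2)\implies(1)$ you use a top singular pair of $A$ together with hypothesis (2); for $(1)\implies(2)$ you use the test matrix $uv^*$, a norm-one minimizer, and the equality case of Cauchy--Schwarz. The differences are cosmetic: you invoke $\det(I-uv^*X)=1-v^*Xu$ directly where the paper argues via the range of $uv^*X$, and you verify $\mu_E\le\|\cdot\|$ for arbitrary $E$ rather than citing it.
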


\begin{proof}
	$(1) \implies (2)$. Let $\mu_E(A)=\|A\|$ for all $A \in M_n(\C)$. Take unit vectors $u, v \in \C^n$ and define $M=uv^*$. Clearly, $\|M\|=1$ and $Mv=u$. By hypothesis, $\mu_E(M)=1$ and so, $\inf\{\|X\|: X \in E, \det(I-AX)=0\}=1$. By compactness arguments, there exists $X \in E$ with $\det(I-AX)=0$ such that $\|X\|=1$. For any $z \in \C^n$, we have
	\[
	MXz=u(v^*Xz)=\alpha_0 u, \quad \text{where $\alpha_0=v^*Xz \in \C$}.
	\]
	Thus, the range of $MX$ is contained in the subspace $\text{span}\{u\}$. Since $\det(I-MX)=0$, there exists $y \in \C^n$ with $\|y\|=1$ such that $y=MXy$. Hence, $y$ lies in the range of $MX$ and thus, one can choose $\alpha \in \C \setminus \{0\}$ such that $y=\alpha u$. Note that $MX(\alpha u)=MXy=y=\alpha u$ and so, $\alpha u (v^*Xu)=\alpha MXu=\alpha u$. Since $\alpha \ne 0$ and $u^*u=1$, we have that $v^*Xu=1$, i.e., $\langle Xu, v \rangle=1$. By Cauchy-Schwarz inequality, we have
	\[
	1=|\langle Xu, v \rangle| \leq \|Xu\| \ \|v\| \leq \|X\| \ \|u\| \|v\|=1
	\]
	and thus, $v$ and $Xu$ are linearly dependent vectors in $\C^n$. One can now choose $\beta \in \C$ so that $Xu=\beta v$. Using the facts that $\langle Xu, v \rangle=1$ and $\|v\|=1$, we have $\beta=1$. Therefore, $Xu=v$.
	
	\medskip 
	
	\noindent $(2) \implies (1)$. The proof is essentially based on the discussion after Lemma 3.7 in \cite{Packard}. For any $A \in M_n(\C)$, it is well-known that $\mu_E(A) \leq \|A\|$. Let $A \in M_n(\C)$. If $A=0$, then $\mu_E(A)=\|A\|=0$. Assume that $A \ne 0$. Choose unit vectors $u, v$ in $\C^2$ such that $\|Av\|=\|A\|$ and $Av=\|A\|u$. By given hypothesis, there exists $S \in E$ with $\|S\|=1$ such that $Su=v$. Define $X=\|A\|^{-1}S$. Note that $X \in E$ and $\|X\|=1 \slash \|A\|$. Then
	$\displaystyle AXu=\|A\|^{-1}ASu=\|A\|^{-1}Av=u$ and $\det(I-AX)=0$. Consequently, $\displaystyle \mu_E(A)^{-1}=\inf \left\{\|Y\|: Y \in E, \ \det(I-AY)=0\right\} \leq \|X\|=\|A\|^{-1}$ and so, $\|A\| \leq \mu_E(A)$. 
\end{proof}

Our next result provides a lower bound on the dimension of $E$ for which $\mu_E=\|.\|$.

\begin{cor}\label{cor_dimn}
	For a linear subspace $E$ of $M_n(\C)$ with $\mu_E=\|.\|$, we have $\dim(E)\geq n$.
\end{cor}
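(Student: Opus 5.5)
We plan to use the characterization in Theorem \ref{thm_norm}: since $\mu_E=\|.\|$, condition (2) holds. That is, for every pair of unit vectors $u,v\in\C^n$ there is some $X\in E$ with $\|X\|=1$ and $Xu=v$. The idea is to fix a single unit vector $u$ and look at the evaluation map
\[
\Phi_u: E\to \C^n, \qquad \Phi_u(X)=Xu .
\]
This map is linear. By condition (2), its image contains every unit vector $v\in\C^n$.

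Since the image of $\Phi_u$ is a linear subspace of $\C^n$ containing all unit vectors, and every nonzero vector is a scalar multiple of a unit vector, the image is all of $\C^n$. Hence $\Phi_u$ is surjective, and rank–nullity gives
\[
\dim(E)\geq \dim \Phi_u(E)=n .
\]
Concretely, taking $u=e_1$, we can choose $X_1,\dotsc,X_n\in E$ with $X_je_1=e_j$. These matrices are linearly independent, because applying any vanishing linear combination $\sum_j c_jX_j=0$ to $e_1$ gives $\sum_j c_je_j=0$, so all $c_j=0$.

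There is no real obstacle here. The only point to check is that we apply Theorem \ref{thm_norm} in the direction $(1)\implies(2)$, which is exactly the hypothesis $\mu_E=\|.\|$. The norm condition $\|X\|=1$ is not even needed for this dimension bound; only the reachability $Xu=v$ is used.
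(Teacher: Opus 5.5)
Your proof is correct and follows essentially the same route as the paper: apply Theorem \ref{thm_norm} $(1)\implies(2)$, fix a unit vector $u$, and show that the linear evaluation map $X\mapsto Xu$ from $E$ to $\C^n$ is surjective, whence $\dim(E)\geq n$. The explicit linearly independent family $X_1,\dotsc,X_n$ with $X_je_1=e_j$ and your remark that the condition $\|X\|=1$ is not needed are accurate additions.
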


\begin{proof}
	Let $u$ be a unit vector in $\C^n$. Consider the linear map $\varphi: E \to \C^n$ given by $\varphi(A)=Au$. Let $v$ be a non-zero vector in $\C^n$. Theorem \ref{thm_norm} guarantees the existence of $X \in E$ such that $Xu=v\slash \|v\|$ and so, $\varphi(\|v\|X)=\|v\|Xu=v$. Hence, $\varphi$ is surjective and the desired conclusion holds. 
\end{proof}

We now present concrete examples of proper linear subspaces $E \subseteq M_2(\C)$ such that $\mu_E=\|.\|$. To this end, Theorem \ref{thm_norm} reduces the task to constructing subspaces $E \subseteq M_2(\C)$ whose unit norm matrices act transitively on the unit sphere of $\C^2$. In this direction, our first example shows that $\mu_E=\|.\|$ when $E=E_{\text{symm}}$ in $M_2(\C)$.

\begin{eg}\label{lem_501}
	Let $u, v$ be unit vectors in $\C^2$. Choose a unitary $W \in M_2(\C)$ such that $We_1=u$. Define 
$
w=W^tv=\begin{pmatrix}\alpha \\ \beta \end{pmatrix}$ for some $\alpha, \beta \in \C$. Clearly, $\|w\|^2=|\alpha|^2+|\beta|^2=1$. Choose $\gamma \in \T$ such that $\overline{\beta}=\gamma\beta$ and define $B=\begin{pmatrix}
	\alpha & \beta\\
	\beta & -\overline{\gamma}\ \overline{\alpha}
\end{pmatrix}$. Note that $B$ is a symmetric unitary matrix with $Be_1=w=W^tv$. Hence, the symmetric unitary matrix $A=(W^*)^tBW^*$ satisfies $Au=(W^*)^tBW^*u=(W^*)^tBe_1=(W^*)^tW^tv=(WW^*)^tv=v$. We have by Theorem \ref{thm_norm} that $\mu_{E_{\text{symm}}}=\|.\|$. \qed
\end{eg} 

We now present a generalization of the above example producing a family of proper linear subspaces $E$ in $M_2(\C)$ for which $\mu_E=\|.\|$.

\begin{eg}\label{cor_Etheta}
	Let $\theta \in \R$ and let 
	\[
	E_\theta=\left\{\begin{pmatrix}
		z_1 & w \\
		e^{i\theta} w & z_2
	\end{pmatrix} : z_1, z_2, w \in \C \right\}.
	\]
For the unitary matrix given by $S=\begin{pmatrix}
		1 & 0 \\
		0 & e^{i\theta \slash 2}
	\end{pmatrix}$, we have that $E_{\text{symm}}=\{S^*A S : A \in E_\theta\}$. Let $u, v$ be unit vectors in $\C^2$. Define $u_1=S^*u$ and $v_1=S^*v$. Clearly, $u_1$ and $v_1$ are unit vectors in $\C^2$.	By Example \ref{lem_501}, there is a symmetric unitary matrix $B$ in $M_2(\C)$ such that $Bu_1=v_1$. For $A=SBS^*$, we have that $A$ is a unitary matrix in $ E_\theta$ and $Au=SBS^*u=SBu_1=Sv_1=v$. Consequently, it follows from Theorem \ref{thm_norm} that $\mu_{E_{\theta}}=\|.\|$. \qed 
\end{eg}

The following example gives another family of proper linear subspaces in $M_2(\C)$ for which the associated structured singular value coincides with the operator norm.

\begin{eg}\label{cor_Munitary}
	Consider the linear subspace of $M_2(\C)$ given by 
	\[
	E_M=\{A \in M_2(\C) : \text{tr}(M^*A)=0\},
	\]
	where $M \in M_2(\C)$ is a unitary matrix. We show that $\mu_{E_M}=\|.\|$ and by Theorem \ref{thm_norm}, it suffices to show that for any unit vectors $u, v \in \C^2$, there exists a unitary  matrix $A \in M_2(\C)$ such that $\text{tr}(M^*A)=0$ and $Au=v$. To this end, let $u, v$ be unit vectors in $\C^2$ and let $v_0=M^*v$. Choose a unitary $W \in M_2(\C)$ such that $We_1=u$. Define 
	$
	w=W^*v_0=\begin{pmatrix}\alpha \\ \beta \end{pmatrix}$ for some $\alpha, \beta \in \C$. Evidently, $\|w\|^2=|\alpha|^2+|\beta|^2=1$. Choose $\gamma \in \T$ such that $\alpha=-\gamma\overline{\alpha}$ and define $B=\begin{pmatrix}
		\alpha & -\gamma \overline{\beta}\\
		\beta & \gamma \overline{\alpha}
	\end{pmatrix}$. It is easy to see that $B$ is a unitary matrix with $\text{tr}(B)=0$ and $Be_1=w=W^*v_0$. Consequently, the unitary matrix $U=WBW^*$ has trace zero and 
	$Uu=WBW^*u=WBe_1=WW^*v_0=v_0=M^*v$. It is easy to see that $A=MU$ is a unitary matrix in $E_M$ with $Au=v$. \qed
\end{eg}

All the proper linear subspaces $E \subseteq M_2(\C)$ for which $\mu_E=\|.\|$ as mentioned in the above examples have dimension three. While Corollary \ref{cor_dimn} shows that any such linear subspace $E$ has dimension at least two, we shall prove in Corollary \ref{cor_303} that dimension of $E$ cannot be equal to $2$.  We conclude this section by providing an analog of  Example \ref{lem_501} in higher dimensions, and prove that $\mu_E=\|.\|$ when $E$ is a linear subspace of $M_n(\C)$ containing the subspace $E_{\text{symm}}$. 

\begin{thm}\label{thm_transitive}
	For unit vectors $u, v \in \C^n$, there exists a symmetric unitary matrix $U \in M_n(\C)$ such that $Uu=v$. 	In particular, if $E$ is a linear subspace of $M_n(\C)$ with $E_{\text{symm}} \subseteq E$, then $\mu_E=\|.\|$.
\end{thm}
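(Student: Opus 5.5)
We need a symmetric unitary $U$ with $Uu=v$ for arbitrary unit vectors $u,v\in\C^n$. The plan is to reduce to a canonical position by a unitary congruence $U\mapsto W^tBW$, which preserves both symmetry and unitarity, and then use the $2\times 2$ construction of Example \ref{lem_501} as a building block.

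\textbf{Step 1 (reduction to $u=e_1$).} Choose a unitary $W$ with $We_1=u$, and set $w=W^tv$, which is a unit vector. Suppose $B$ is a symmetric unitary with $Be_1=w$. Put $U=(W^*)^tBW^*$. Then $U$ is symmetric, since $U^t=(W^*)^tB^tW^*=U$, and unitary as a product of unitaries. Moreover
\[
Uu=(W^*)^tBe_1=(W^*)^tW^tv=(WW^*)^tv=v,
\]
exactly as in Example \ref{lem_501}. So it suffices to find, for every unit vector $w\in\C^n$, a symmetric unitary $B$ with $Be_1=w$.

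\textbf{Step 2 (reducing $w$ to two coordinates).} Write $w=(w_1,w')$ with $w'\in\C^{n-1}$. If $w'=0$, take $B=\diag(w_1,1,\dotsc,1)$, which works since $|w_1|=1$. Otherwise take a unitary $Q\in M_{n-1}(\C)$ with $Q^tw'=\|w'\|e_1$; this is possible by choosing $Q^t$ to be any unitary sending $w'$ to $\|w'\|e_1$. Set $V=1\oplus Q$. The congruence $B=V B_0 V^t$ preserves symmetry and unitarity, and $Be_1=VB_0e_1$. So it suffices to find $B_0$ with $B_0e_1=V^{-1}w=(w_1,\,Q^*w')$. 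This requires $Q^*w'=\|w'\|e_1$, which is a different condition from $Q^tw'=\|w'\|e_1$.

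\textbf{Step 3 (fixing the transpose mismatch).} The cleanest way to handle this is to use $B=\overline{V}B_0V^*$ with $V$ unitary. This is symmetric because $(\overline{V}B_0V^*)^t=\overline{V}B_0^tV^t$, and when $V$ is real this equals $\overline{V}B_0V^*$; for complex $V$ the correct congruence pairs $V$ with $V^t$. So I will use $B=VB_0V^t$ and require $V^t$ to fix $e_1$ and $V$ to map $B_0e_1$ to $w$. Writing $V=1\oplus Q$, the first condition holds automatically, and we need $Q$ unitary with $Q\bigl(\|w'\|e_1\bigr)=w'$. Such a $Q$ exists. Then take
\[
B_0=B_2\oplus I_{n-2},
\]
where $B_2$ is the $2\times 2$ symmetric unitary from Example \ref{lem_501} with $B_2e_1=(w_1,\|w'\|)^t$.

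The main obstacle is exactly this bookkeeping between transpose and adjoint. It is resolved by always using the congruence $X\mapsto VXV^t$ (which preserves symmetry) together with the requirement $V^te_1=e_1$, which holds for block-diagonal $V=1\oplus Q$.

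\textbf{Step 4 (the ``in particular'').} For unit vectors $u,v$, the symmetric unitary $U$ constructed above lies in $E_{\text{symm}}\subseteq E$, has $\|U\|=1$, and satisfies $Uu=v$. Theorem \ref{thm_norm} then gives $\mu_E=\|.\|$.
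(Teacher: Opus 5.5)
Your argument is correct, but it follows a different route from the paper.

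\textbf{The paper's route.} The paper never reduces to $u=e_1$. It sets $\zeta=u^tv$ and picks $y\in\operatorname{span}\{e_1,e_2\}$ with $\|y\|=1$ and $y^ty=\zeta$. It then observes that the pairs $(u,\overline{v})$ and $(y,\overline{y})$ have the same Gram matrix. Hence a single unitary $W$ satisfies $Wu=y$ and $W\overline{v}=\overline{y}$, i.e.\ $v=W^ty$, and $U=W^tW$ works.

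\textbf{Your route.} You compose two unitary congruences. The first, $X\mapsto\overline{W}XW^*$, sends $u$ to $e_1$. The second, $X\mapsto VXV^t$ with $V=1\oplus Q$, compresses $w$ into the first two coordinates. You then finish with the explicit $2\times 2$ matrix of Example \ref{lem_501} padded by $I_{n-2}$, giving $U=\overline{W}\,V(B_2\oplus I_{n-2})V^t\,W^*$.

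\textbf{Comparison.} Both proofs rest on the same invariance: unitary congruence $X\mapsto W^tXW$ preserves symmetric unitaries. Both also compress the problem to a $2$-dimensional one. Your version produces an explicit formula that uses only unitaries with a prescribed first column. This makes the $n$-dimensional case as concrete as Example \ref{lem_501}, whereas the paper presents its argument as giving existence only. The paper's version is shorter and reaches the compact form $U=W^tW$ in a single Gram-matrix step. Your direct application of Theorem \ref{thm_norm} to $E$ in Step 4 is equivalent to the paper's sandwich $\mu_{E_{\text{symm}}}\le\mu_E\le\|\cdot\|$.

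\textbf{One thing to clean up.} The transpose discussion in Steps 2--3 is a detour built on a computational slip. In fact $(\overline{V}B_0V^*)^t=(V^*)^tB_0^t\,\overline{V}^{\,t}=\overline{V}B_0^tV^*$, not $\overline{V}B_0^tV^t$. So $\overline{V}B_0V^*$ is symmetric for every $V$, which is exactly what you used correctly in Step 1. There is also no real mismatch in Step 2: you simply need $Q$ with $Q^*w'=\|w'\|e_1$, i.e.\ $Qe_1=w'/\|w'\|$, and that is what Step 3 ends up choosing. Delete the detour; the final construction is unaffected.
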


\begin{proof}
	For a given a complex matrix $W$ of order $m \times n$, we denote by $\overline{W}=(W^*)^t$. Let $u, v$ be unit vectors in $\C^n$ and $\zeta=u^tv$.  Then $|\zeta|=|u^tv|=|\overline{u}^*v|=\left| \langle \overline{u}, v \rangle \right| \leq \|\overline{u}\| \|v\| \leq 1$. We can write $\zeta=|\zeta|e^{i\theta}$ for some $\theta \in \R$. Choose $y=(y_1, y_2, 0, \dotsc, 0)^t \in \C^n$ with 
	\[
	y_1=e^{i\theta\slash 2}\sqrt{\frac{1+|\zeta|}{2}} \quad \text{and} \quad y_2=ie^{i\theta\slash 2}\sqrt{\frac{1-|\zeta|}{2}}.
	\]
	It is clear that $\|y\|=\sqrt{|y_1|^2+|y_2|^2}=1$ and $y^ty=y_1^2+y_2^2=|\zeta| e^{i\theta}=\zeta$. For $u_1=u, v_1=y, u_2=\overline{v}, v_2=\overline{y}$, we have that $u_i^*u_j=v_i^*v_j$ for $1 \leq i, j \leq 2$. In other words, the Gram matrices $[u_i^*u_j]_{i, j=1}^2$ and $[v_i^*v_j]_{i, j=1}^2$ coincide. Consequently, one can find a unitary $W \in M_n(\C)$ such that $Wu_i=v_i$ for $ 1 \leq i, j \leq 2$. So, we have $Wu=y$ and $W\overline{v}=\overline{y}$. Also, $\overline{v}=W^*\overline{y}$ and thus, $v=W^ty$. Evidently, the symmetric unitary matrix $U=W^tW$ satisfies $Uu=W^tWu=W^ty=v$.  Since $E_{\text{symm}} \subseteq E \subseteq M_n(\C)$, we have that $\mu_{E_{\text{symm}}}(A) \leq \mu_E(A) \leq \|A\|$ for all $A \in M_n(\C)$. The rest of the conclusion follows from Theorem \ref{thm_norm}, which completes the proof.
\end{proof}

Evidently, Example \ref{lem_501} and Theorem \ref{thm_transitive} both capitalize Theorem \ref{thm_norm} establishing the transitivity of unit norm matrices on the unit sphere of $\C^2$. While Example \ref{lem_501} gives an explicit construction of such a unit norm matrix for a given pair of unit vectors in $\C^2$, the arguments presented in Theorem \ref{thm_transitive} for unit vectors in $\C^n$ give existence of such a unit norm matrix. 

\section{Structured singular value and the numerical radius} \label{sec-04-new}

\noindent In this section, we investigate if there is a linear subspace $E$ of $M_n(\C)$ such that the associated structured singular value $\mu_E$ coincides with the numerical radius. Recall that the numerical radius of a matrix $A \in M_n(\C)$ is defined by $w(A)=\sup\{|x^*Ax| : x \in \C^n, \|x\|=1\}$. It is well-known that the numerical radius defines a norm on $M_n(\C)$ that is equivalent to the operator norm, satisfying
\[
\max\{r(A), \|A\| \slash 2\} \leq w(A) \leq \|A\| \quad \text{for all } A \in M_n(\C).
\]
Here, we prove that even for $n=2$ there is no linear subspace $E \subseteq M_2(\C)$ such that $\mu_E(A)=w(A)$ for all $A \in M_2(\C)$. Surprisingly, more is true; we prove that there is no linear subspace $E \subseteq M_2(\C)$ for which $\mu_E$ is a convex combination of the spectral radius, numerical radius, and operator norm, except when $\mu_E$ coincides with either the spectral radius or the operator norm. We need a few elementary results to reach our goal.

\begin{lem}\label{lem_301}
	If $E$ is a linear subspace of $M_2(\C)$ with $\dim(E) \geq 2$, then $E$ has a rank one matrix.
\end{lem}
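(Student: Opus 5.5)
The plan is to reduce to a two-dimensional subspace and observe that the rank-one condition is a single homogeneous quadratic equation in two variables, which always has a nontrivial solution over $\C$. Since $\dim(E)\geq 2$, choose two linearly independent matrices $P, Q\in E$ and set $S=\text{span}\{P,Q\}\subseteq E$. It suffices to find a rank one matrix in $S$.

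If $P$ or $Q$ already has rank one, we are done. So assume both are invertible; in $M_2(\C)$ a nonzero matrix has rank one or two. Consider the function
\[
f(s,t)=\det(sP+tQ),\quad (s,t)\in\C^2.
\]
It is a homogeneous polynomial of degree $2$, namely $f(s,t)=\det(P)s^2+c\,st+\det(Q)t^2$ for some $c\in\C$. Since $\det(P)\neq 0$, the polynomial $g(s)=f(s,1)=\det(P)s^2+cs+\det(Q)$ has degree exactly $2$. By the fundamental theorem of algebra it has a root $s_0\in\C$, and $s_0\neq 0$ because $\det(Q)\neq 0$.

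Now put $R=s_0P+Q\in S\subseteq E$. Then $R\neq 0$ because $P$ and $Q$ are linearly independent, and $\det(R)=0$. Hence $R$ has rank exactly one, which proves the lemma.

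The only point that needs care is making sure the zero of $\det$ is attained at a nonzero matrix. This is exactly what linear independence of $P$ and $Q$ guarantees, together with the observation that the restriction to $t=1$ really is a quadratic because $\det(P)\neq 0$. No step beyond this is expected to be an obstacle.
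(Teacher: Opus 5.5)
Your proof is correct and follows the same route as the paper: the determinant of a pencil in $\text{span}\{P,Q\}$ vanishes somewhere by the fundamental theorem of algebra, and linear independence rules out the zero matrix. Your preliminary reduction to invertible $P,Q$ is a small improvement. The paper applies the fundamental theorem of algebra directly to $\det(zA+B)$, which can be a nonzero constant (e.g.\ $A=E_{12}$, $B=I$); that case is covered only implicitly, since $A$ is then already rank one.
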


\begin{proof}
	Since $\dim(E) \geq 2$, there exist two linearly independent matrices $A$ and $B$ in $E$. By fundamental theorem of algebra, one can choose $\alpha \in \C$ such that the polynomial $p(z)=\det(Az+B)$ has $\alpha$ as a root. Then $\alpha A+B$ is non-invertible and so, it has rank at most $1$. Since $A$ and $B$ are linearly independent, $\alpha A+B$ cannot have rank $0$ and thus, the rank of $\alpha A+B$ is one.
\end{proof}

Consider the space $M_n(\C)$ equipped with the inner product structure given by
\[
\langle A, B \rangle_{\text{HS}} =tr(B^*A), 
\]
for all $A, B \in M_n(\C)$. The inner product $\langle .,.\rangle_{\text{HS}}$ is commonly referred to as the \textit{Hilbert-Schmidt inner product}. For a linear subspace $E$ of $M_n(\C)$, we denote by
\[
E_\hp=\{X \in M_n(\C): \langle X, Y\rangle_{\text{HS}}=0 \ \text{for all $Y \in E$}\}=\{X \in M_n(\C): tr(Y^*X)=0 \ \text{for all $Y \in E$}\}.
\] 
The Hilbert-Schmidt inner product on $M_n(\C)$ plays a crucial role throughout the section. 

\begin{lem}\label{lem_302}
	Let $E$ be a linear subspace of $M_2(\C)$. If $E_\hp$ has a rank one matrix, then there exists a rank one matrix $A \in M_2(\C)$ such that $\mu_E(A)=0$.	
\end{lem}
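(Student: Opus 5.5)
Let $P=xy^*\in E_\hp$ be a rank one matrix, with $x,y\in\C^2$ nonzero. The plan is to take $A=yx^*$, which is rank one, and show that $\det(I-AX)=1$ for every $X\in E$. Then no $X\in E$ satisfies $\det(I-AX)=0$, and by the convention in \eqref{eqn:NEW-01} we get $\mu_E(A)=0$.

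The key computation uses the identity $\det(I-uv^t)=1-v^tu$ already used in the proofs of Theorem \ref{thm:sec2-01} and Lemma \ref{lem_ED2}. For $X\in E$ we have
\[
\det(I-AX)=\det(I-y\,x^*X)=1-x^*Xy .
\]
On the other hand, $P\in E_\hp$ means $\operatorname{tr}(X^*P)=0$ for all $X\in E$. Now
\[
\operatorname{tr}(X^*P)=\operatorname{tr}(X^*xy^*)=y^*X^*x=\overline{x^*Xy}.
\]
Hence $x^*Xy=0$ for all $X\in E$, and therefore $\det(I-AX)=1\neq 0$.

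The only real care needed is getting the placement of $x,y$ in $A$ correct, so that the quantity appearing in the determinant matches the Hilbert--Schmidt pairing up to conjugation; it is $A=yx^*$, not $xy^*$. The argument does not use $n=2$, so the same proof works in $M_n(\C)$.
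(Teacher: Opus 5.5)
Your proof is correct and is essentially the paper's argument. The paper likewise writes the rank-one element of $E_\hp$ as $Y=vu^*$, takes $A=uv^*=Y^*$, and computes $\det(I-uv^*X)=1-v^*Xu=1-\operatorname{tr}(Y^*X)=1$ for all $X\in E$. Your remark that the argument does not depend on $n=2$ is also right.
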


\begin{proof}
	Suppose $Y \in E_\hp$ has rank one. Then $Y=vu^*$ for some non-zero vectors $u, v \in \C^2$. For every $X \in E$, we have $\langle X, Y\rangle_{\text{HS}}=0$. Consequently, 
	$
	\det(I-uv^*X)=1-v^*Xu=1-tr(uv^*X)=1-tr(Y^*X)=1-\langle X, Y\rangle_{\text{HS}}=1.
	$
	Hence, no $X \in E$ satisfies $\det(I-uv^*X)=0$ and so $\mu_E(uv^*)=0$. Since $uv^*$ is rank one, the desired conclusion follows.
\end{proof}

\begin{cor}\label{cor_303}
	If $E$ is a linear subspace of $M_2(\C)$ with $\dim(E) \leq 2$, then $\mu_E$ cannot define a norm on $M_2(\C)$.
\end{cor}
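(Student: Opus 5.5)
The plan is to show that if $\dim(E)\le 2$, then there is a nonzero matrix $A$ with $\mu_E(A)=0$. This violates definiteness, so $\mu_E$ is not a norm. By Lemma \ref{lem_302}, it suffices to produce a rank one matrix in $E_\hp$.

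The key step is a dimension count. Since $M_2(\C)$ with $\langle\cdot,\cdot\rangle_{\text{HS}}$ is a $4$-dimensional inner product space, we have $\dim(E_\hp)=4-\dim(E)\geq 2$. Lemma \ref{lem_301} then gives a rank one matrix $Y\in E_\hp$. Lemma \ref{lem_302} produces from it a rank one, hence nonzero, $A=uv^*$ with $\mu_E(A)=0$. A norm vanishes only at $0$, so $\mu_E$ cannot be a norm on $M_2(\C)$.

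The argument uses only the earlier lemmas, so I expect no real obstacle. The one point to check is that $E_\hp$ is itself a linear subspace of the right dimension, so that Lemma \ref{lem_301} applies. This holds because $E_\hp$ is the orthogonal complement of $E$ with respect to a genuine inner product.

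It is also worth noting that the conclusion is stronger than stated: no such $\mu_E$ can equal $\|\cdot\|$, $w(\cdot)$, or any norm. This justifies the remark preceding Theorem \ref{thm_transitive} that a subspace $E\subseteq M_2(\C)$ with $\mu_E=\|\cdot\|$ cannot have dimension $2$.
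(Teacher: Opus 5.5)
Your proposal is correct and matches the paper's proof step for step. The paper also uses the count $\dim(E_\hp)=4-\dim(E)\geq 2$, takes a rank one element of $E_\hp$ from Lemma \ref{lem_301}, and then applies Lemma \ref{lem_302} to get a nonzero rank one $A$ with $\mu_E(A)=0$, which violates definiteness.
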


\begin{proof}
	It is evident that $\dim(E_\hp)= 4-\dim(E) \geq 2$. By Lemma \ref{lem_301}, $E_\hp$ has a rank one matrix. It follows from Lemma \ref{lem_302} that there is a rank one matrix $A \in M_2(\C)$ such that $\mu_E(A)=0$. Since $A$ has rank one and $\mu_E(A)=0$, the function $\mu_E$ cannot define a norm on $M_2(\C)$.
\end{proof}

Before going to the main result, we establish that no convex combination of the numerical radius and operator norm can arise as a structured singular value except for the operator norm. In this direction, we begin with following result.

\begin{prop}\label{prop_304}
	Let $E$ be a linear subspace of $M_2(\C)$ with $E_\diag \subseteq E$ and let $t \in (0, 1]$. Then there exists a matrix $A$ $($depending on $t)$ such that  $\mu_E(A) \ne tw(A)+(1-t)\|A\|$. 
\end{prop}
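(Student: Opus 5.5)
The plan is a case analysis on $\dim(E)$. Since $E_\diag\subseteq E\subseteq M_2(\C)$, we have $\dim(E)\in\{2,3,4\}$. In every case the test matrix is one of the nilpotent matrices $E_{12}$ or $E_{21}$. For $N=E_{12}$ (and likewise $E_{21}$) one has $\|N\|=1$ and $w(N)=1/2$. Hence the target value is
\[
tw(N)+(1-t)\|N\| = 1-\tfrac{t}{2}\in[\tfrac12,1),
\]
which is strictly positive and strictly less than $1$ because $t\in(0,1]$. It therefore suffices to find, in each case, one of these two matrices whose $\mu_E$ equals $0$ or $1$.

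\textbf{Case $\dim(E)=4$.} Here $E=M_2(\C)$, so $\mu_E(E_{12})=\|E_{12}\|=1\neq 1-t/2$.

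\textbf{Case $\dim(E)=2$.} Here $E=E_\diag$. For every diagonal $D$ we have $\det(I-E_{12}D)=1$, as in the proof of Proposition \ref{prop_mu}. Hence $\mu_E(E_{12})=0\neq 1-t/2$.

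\textbf{Case $\dim(E)=3$.} Then $E_\hp$ is spanned by a single nonzero $Y$. Since $E_\diag\subseteq E$, we get $\langle E_{11},Y\rangle_{\text{HS}}=\langle E_{22},Y\rangle_{\text{HS}}=0$, so $Y=\begin{pmatrix}0&p\\ q&0\end{pmatrix}$.
\begin{itemize}
\item If $p=0$ or $q=0$, then $Y$ has rank one. Lemma \ref{lem_302} gives a rank one $A$ with $\mu_E(A)=0$. I need the rank one $A$ produced there to be concretely $E_{12}$ or $E_{21}$. Tracing the proof, $A=uv^*$ where $Y=vu^*$, so $A$ is a multiple of $E_{12}$ or $E_{21}$. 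By homogeneity of $\mu_E$, $w$ and $\|\cdot\|$, this forces $\mu_E(A)=0$ while the convex combination is $(1-t/2)\|A\|>0$.
\item If $p,q\neq 0$, then $E=\{X:\ \overline{p}x_{12}+\overline{q}x_{21}=0\}$.
\end{itemize}

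The key computation in the subcase $p,q\neq0$ is as follows. Since $\det(I-E_{12}X)=1-x_{21}$, the admissible $X\in E$ for $E_{12}$ are exactly
\[
X=\begin{pmatrix}z_1&-\overline{q}/\overline{p}\\ 1&z_2\end{pmatrix},\qquad z_1,z_2\in\C .
\]
Each such $X$ satisfies $\|X\|\geq\max\{|x_{12}|,|x_{21}|\}$, since the norm dominates each entry. Equality holds at $z_1=z_2=0$, because an antidiagonal matrix has norm equal to the larger modulus of its entries. Thus
\[
\mu_E(E_{12})=\frac{1}{\max\{1,|q|/|p|\}}.
\]
Symmetrically, $\mu_E(E_{21})=1/\max\{1,|p|/|q|\}$. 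If $|q|\geq|p|$, I would use $E_{21}$, giving $\mu_E(E_{21})=1$. Otherwise $|q|<|p|$, and I would use $E_{12}$, again giving $\mu_E(E_{12})=1$. In either subcase $\mu_E=1\neq 1-t/2$.

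The main obstacle is this last subcase. A single test matrix does not suffice there, because $\mu_E(E_{12})$ could coincidentally equal $1-t/2$, namely when $|q|/|p|=(1-t/2)^{-1}$. Choosing between $E_{12}$ and $E_{21}$ according to which of $|p|,|q|$ is larger removes this possibility.
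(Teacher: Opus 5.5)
Your proof is correct and takes essentially the same route as the paper. You split into the cases $\dim(E)\in\{2,3,4\}$, use $E_{12}$ and $E_{21}$ as test matrices where $tw+(1-t)\|\cdot\|$ equals $1-\tfrac t2\in[\tfrac12,1)$, show that $\mu_E$ is $0$ or $1$ on one of them, and in the three-dimensional case pick the test matrix according to which off-diagonal coefficient dominates. The only differences are cosmetic: you parametrize that case by the Hilbert--Schmidt complement $\begin{pmatrix}0&p\\ q&0\end{pmatrix}$ rather than by a third basis vector $\begin{pmatrix}0&\alpha\\ \beta&0\end{pmatrix}$, and you handle $\dim(E)=2$ by a direct computation instead of citing Corollary \ref{cor_303}.
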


\begin{proof}
	Evidently, the function $f_t: M_2(\C) \to [0, \infty)$ given by 
	$f_t(A)=tw(A)+(1-t)\|A\|$ defines a norm on $M_2(\C)$. Let if possible, $\mu_E=f_t$. Since $E \supseteq E_\diag$, we have $\dim(E) \geq 2$. By Corollary \ref{cor_303}, $\dim(E) \ne 2$. If $\dim(E)=4$, then $\mu_E=\|.\|$ and so, $f_t(A)=tw(A)+(1-t)\|A\|=\|A\|$ for all $A \in M_2(\C)$. Therefore, $w(A)-\|A\|=0$ for all $A \in M_2(\C)$, which is not true in general. Hence, $\dim(E)=3$ and we have
	\[
	E=\text{span}\left\{\begin{pmatrix}
		1 & 0\\
		0 & 0
	\end{pmatrix}, \begin{pmatrix}
		0 & 0\\
		0 & 1
	\end{pmatrix}, \begin{pmatrix}
		0 & \alpha\\
		\beta & 0
	\end{pmatrix}\right\}
	\]
	for some $\alpha, \beta \in \C$. Since $w(E_{12})=w(E_{21})=1\slash 2$, we have that $f_t(E_{12})=f_t(E_{21})=1-t\slash 2$.  We now discuss four cases depending on the values of $\alpha$ and $\beta$.
	
	\medskip 
	
	\noindent \textbf{Case 1.} Let $\alpha=0$. Then $\beta \neq 0$, otherwise $\dim(E)=2$. Evidently, $E = \mathrm{span}\{E_{11}, E_{22}, E_{21}\}$. For any $X \in E$, $\det(I - E_{21}X) = 1$. Thus, $\mu_E(E_{21})= 0$, contradicting $\mu_E=f_t$ since $f_t(E_{21}) = 1-t\slash 2$.
	
	\medskip 
	
	\noindent \textbf{Case 2.} Let $\beta=0$. Then $\alpha \neq 0$, otherwise $\dim(E)=2$. Clearly, $E = \mathrm{span}\{E_{11}, E_{22}, E_{12}\}$. For any $X \in E$, $\det(I - E_{12}X) = 1$. Therefore, $\mu_E(E_{12})= 0$, which is a contradiction as $f_t(E_{12})= 1-t\slash 2$.
	
	\medskip 
	
	\noindent \textbf{Case 3.} Let $\alpha, \beta \ne 0$ with $|\alpha| \leq |\beta|$. We can simply write 
	\[
	E = \text{span}\left\{E_{11}, E_{22}, \begin{pmatrix}
		0 & x \\
		1 & 0
	\end{pmatrix}\right\}=\left\{\begin{pmatrix}
		a & \gamma \ x \\
		\gamma & b
	\end{pmatrix} : a, b, \gamma \in \C \right\},
	\]
	where $x=\alpha \slash \beta$. For every $X=\begin{pmatrix}
		a & \gamma \ x \\
		\gamma & b
	\end{pmatrix} \in E$, we have that $\det(I-E_{12}X)=1-\gamma$. Then
	\[
	1 \leq \inf\{\|X\|: X \in E, \det(I-E_{12}X)=0\}=\inf\left\{\left\|\begin{pmatrix}
		a & x \\
		1 & b
	\end{pmatrix}\right\| : a, b \in \C \right\} \leq \left\|\begin{pmatrix}
		0 & x \\
		1 & 0
	\end{pmatrix}\right\| \leq 1,
	\]
	where the last inequality holds as $|x| \leq 1$. Thus, $\mu_E(E_{12})=1$ contradicting $\mu_E=f_t$ as $f_t(E_{12}) < 1$.
	
	\medskip 
	
	\noindent \textbf{Case 4.} Let $\alpha, \beta \ne 0$ with $|\beta| \leq |\alpha|$. We can simply write 
	\[
	E = \text{span}\left\{E_{11}, E_{22}, \begin{pmatrix}
		0 & 1 \\
		y & 0
	\end{pmatrix}\right\}=\left\{\begin{pmatrix}
		a & \gamma  \\
		\gamma \ y & b
	\end{pmatrix} : a, b, \gamma \in \C \right\},
	\]
	where $y=\beta\slash \alpha$. For every $X=\begin{pmatrix}
		a & \gamma  \\
		\gamma \ y & b
	\end{pmatrix} \in E$, we have that $\det(I-E_{21}X)=1-\gamma$. Then
	\[
	1 \leq \inf\{\|X\|: X \in E, \det(I-E_{21}X)=0\}=\inf\left\{\left\|\begin{pmatrix}
		a & 1 \\
		y & b
	\end{pmatrix}\right\| : a, b \in \C \right\} \leq \left\|\begin{pmatrix}
		0 & 1 \\
		y & 0
	\end{pmatrix}\right\| \leq 1,
	\]
	where the last inequality holds as $|y| \leq 1$. Thus, $\mu_E(E_{21})=1$ contradicting $\mu_E=f_t$ as $f_t(E_{21}) < 1$.
	
	\medskip 
	
	In either case, we obtain a contradiction and thus, $\mu_E \ne f_t$. The proof is now complete.
\end{proof}

The following result shows that no convex combination of the numerical radius and the operator norm can be realized as a structured singular value, except for the operator norm. 

\begin{thm}\label{thm_numrigidI}
	Let $t \in (0, 1]$. If $f_t: M_2(\C) \to [0, \infty)$ is the function 
	$
	f_t(A)=tw(A)+(1-t)\|A\|,
	$ 
	then no linear subspace $E$ of $M_2(\C)$ satisfies $\mu_E=f_t$.
\end{thm}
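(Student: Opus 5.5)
The plan is a dimension count that leaves a single subspace of codimension one, followed by one explicit rank-one test matrix for which $\mu_E$ is too large. Suppose, to the contrary, that $\mu_E=f_t$ for some linear subspace $E\subseteq M_2(\C)$. Since $f_t$ is a norm on $M_2(\C)$, we have $\mu_E(A)>0$ for every rank one $A$. By Lemma \ref{lem_302}, $E_\hp$ then contains no rank one matrix, and Lemma \ref{lem_301} forces $\dim(E_\hp)\leq 1$, so $\dim(E)\geq 3$ (consistent with Corollary \ref{cor_303}).

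The test matrices are rank-one matrices built from orthonormal vectors. If $u,w\in\C^2$ are unit vectors with $w\perp u$, then $A=uw^*$ satisfies $\|A\|=1$. Its numerical radius is $w(A)=\tfrac12(\|u\|\|w\|+|w^*u|)=\tfrac12$, so $f_t(A)=1-t/2<1$ because $t>0$. It therefore suffices to produce such a pair $u,w$ together with some $Z\in E$ satisfying $\|Z\|=1$ and $Zu=w$. For such $Z$ we get $\det(I-AZ)=1-w^*Zu=1-\|w\|^2=0$, hence $\mu_E(A)\geq 1>f_t(A)$, which is the desired contradiction.

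If $\dim(E)=4$, then $E=M_2(\C)$, and we may take $Z=wu^*+uw^*$, which is a unitary matrix mapping $u$ to $w$. (Alternatively, $\mu_E=\|\cdot\|$ directly, and $E_{12}$ already gives the contradiction.) If $\dim(E)=3$, then $E=\{X:\operatorname{tr}(Y^*X)=0\}$ for some nonzero $Y$. I would look for $Z$ of the form
\[
Z=\alpha\, u_\perp u^*+\beta\, u u_\perp^*,\qquad |\alpha|=|\beta|=1,
\]
where $\{u,u_\perp\}$ is an orthonormal basis. Such a $Z$ is unitary, sends $u$ to $w:=\alpha u_\perp\perp u$, and satisfies
\[
\operatorname{tr}(Y^*Z)=\alpha\,\overline{u_\perp^*Yu}+\beta\,\overline{u^*Yu_\perp}.
\]
Choosing unimodular $\alpha,\beta$ that make this vanish is possible exactly when $|u_\perp^*Yu|=|u^*Yu_\perp|$.

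The main step is to find an orthonormal basis satisfying this equality of moduli. I would use an intermediate value argument. Set
\[
u(s)=(\cos s,\sin s)^t,\qquad u_\perp(s)=(-\sin s,\cos s)^t,\qquad g(s)=|u_\perp(s)^*Yu(s)|-|u(s)^*Yu_\perp(s)|.
\]
At $s=\pi/2$ we have $u=e_2$ and $u_\perp=-e_1$, so $g(\pi/2)=|e_1^*Ye_2|-|e_2^*Ye_1|=-g(0)$. Since $g$ is continuous, it vanishes at some $s_0\in[0,\pi/2]$. Taking the basis $\{u(s_0),u_\perp(s_0)\}$ yields $Z\in E$ with $\|Z\|=1$ and $Zu=w\perp u$, so $\mu_E(uw^*)\geq 1>1-t/2=f_t(uw^*)$. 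This contradicts $\mu_E=f_t$ and proves the theorem. The argument does not need Proposition \ref{prop_304}, although that proposition already settles the special case $E_\diag\subseteq E$.
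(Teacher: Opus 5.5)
Your proof is correct, and it takes a different and shorter route than the paper's.

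\textbf{The paper's route.} After the same dimension count, the paper conjugates $E$ by a unitary $U$ that Schur-triangularizes the normal vector $M$ of $E$, using the unitary invariance of $f_t$ to get $\mu_{UEU^*}=f_t$. It disposes of rank-one $M$ by Lemma \ref{lem_302}. It then extracts $P=\diag(p_{11},1)$ and $Q=\diag(1,q_{22})$ from $\mu_{E_U}(E_{ii})=1$ and splits into two cases. If $E_\diag\subseteq E_U$, it invokes the four-case Proposition \ref{prop_304}. Otherwise it writes $E_U$ out, observes $E_{21}\in E_U$, and concludes $\mu_{E_U}(E_{12})=1>1-t/2$.

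\textbf{Your route.} You avoid the conjugation, the rank split on $M$, and Proposition \ref{prop_304}. The intermediate value argument over real rotations gives an orthonormal basis with $|u_\perp^*Yu|=|u^*Yu_\perp|$. That yields a unitary $Z\in E$ with $Zu\perp u$, and the single test matrix $uw^*$ finishes the proof.

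\textbf{Common mechanism.} Both proofs rest on the same fact. A norm-one element of $E$ that maps a unit vector to an orthogonal unit vector forces $\mu_E=1$ on a matrix unitarily similar to $E_{12}$, where $f_t=1-t/2$. The paper's final step is exactly this, with $Z=U^*E_{21}U$.

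Your argument also shows that the paper's $P,Q$ detour is unnecessary. Once the normal vector $N$ is upper triangular, $\operatorname{tr}(N^*E_{21})=0$, so $E_{21}\in E_U$ in both cases.

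The Schur idea gives an even shorter version of your key step. If $u$ is a unit eigenvector of $Y$, then $u_\perp^*Yu=0$, so the rank-one partial isometry $u_\perp u^*$ already lies in $E$. No balancing of moduli is needed. Your intermediate-value construction is equally valid; it just produces a unitary instead of a partial isometry.

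Finally, your argument carries over almost verbatim to the subsequent theorem on $h_t$. There $h_t$ is positive on nonzero matrices when $(t_1,t_2)\neq(1,0)$, and $h_t(uw^*)=1-t_1-t_2/2<1$ when $(t_1,t_2)\neq(0,0)$. This would remove the four-case analysis and the appeal to Example \ref{cor_Munitary} in that proof.
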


\begin{proof}
	Clearly, $f_t$ defines a norm on $M_2(\C)$. Suppose $E$ is a linear subspace of $M_2(\C)$. Let if possible, $\mu_E(A)= f_t(A)$ for all $A \in M_2(\C)$. By Corollary \ref{cor_303}, $\dim(E)>2$. If $\dim(E)=4$, then $\mu_E=\|\cdot\|$. Hence, $f_t(A)=tw(A)+(1-t)\|A\|=\|A\|$ for all $A \in M_2(\mathbb{C})$. This would imply $w(A)=\|A\|$ for all $A \in M_2(\C)$, which is a contradiction. Thus, the only possibility is $\dim(E)=3$. Let $U$ be a unitary in $M_2(\C)$. Note that 
	\begin{align}\label{eqn_muEU}
		\mu_E(U^*AU)=f_t(U^*AU)=f_t(A)=\mu_E(A)
	\end{align}
	for all $A \in M_2(\C)$. For any non-zero $A \in M_2(\C)$, we have $\mu_E(U^*AU)=\mu_E(A)=f_t(A)>0$. Then	
	\begin{align*}
		\mu_E(U^*AU)^{-1}
		&=\inf\{\|X\|: X \in E,  \ \det(I-U^*AUX)=0\}\\
		&=\inf\{\|UXU^*\| : UXU^* \in UEU^*, \  \det(I-AUXU^*)=0 \}\\
		&=\inf\{\|Y\|: Y \in E_U, \ \det(I-AY)=0\}\\
		&=\mu_{E_U}(A)^{-1},
	\end{align*} 
	where $E_U$ is the linear subspace given by
	\[
	E_U=UEU^*=\{UXU^*: X \in E\}=\{Y \in M_2(\C) : U^*YU \in E\}.
	\]
	Since $\dim(E)=3$, we have $\dim(E_U)=3$ and so, $\dim(E_\hp)=\dim((E_U)_\hp)=1$ for every unitary $U \in M_2(\C)$. So, we can write $E_\hp=\text{span}\{M\}$. If $M$ has rank one, then the desired conclusion follows from Lemma \ref{lem_302}. We assume that $M$ has rank $2$. Then $M$ is invertible and we choose a unitary $U$ such that 
	\[
	UMU^*=\begin{pmatrix}
		\lm_1 & m_{12}\\
		0 & \lm_2 
	\end{pmatrix} \quad \text{and so,} \quad U\left(\lm_1^{-1}M\right)U^*=\begin{pmatrix}
		1 & x \\
		0 & y
	\end{pmatrix},
	\]
	where $x=\lm_1^{-1}m_{12}$ and $y=\lm_1^{-1}\lm_2$. Set $M_1=\lm_1^{-1}M$ and $N=UM_1U^*$. Clearly, $M_1 \in E_\hp$ and we have $E=\{X \in M_2(\C) : tr(M_1^*X)=0\}$. Also, it follows that
	\[
	E_U=\{Y \in M_2(\C) : \text{tr}(M_1^*U^*YU) =0\}=\{Y \in M_2(\C) : \text{tr}(N^*Y) =0\}
	\]
	and thus, $(E_U)_\hp=\text{span}\{N\}$. Moreover, we have by \eqref{eqn_muEU} that 
	\begin{align}\label{eqn_muEUII}
		\mu_{E_U}(A)=f_t(A) \quad \text{for all $A \in M_2(\C)$.}
	\end{align} 
	In particular, $\mu_{E_U}(E_{ii})=f_t(E_{ii})=1$ for $i=1, 2$. Hence, $\inf\{\|X\|: X \in E_U, \det(I-E_{ii}X)=0\}=1$ for $i=1, 2$. By compactness argument, we have the existence of $P=(p_{ij})_{i, j=1}^2, Q=(q_{ij})_{i, j=1}^2 \in E_U$ such that $\|P\|=\|Q\|=1$ and $\det(I-E_{22}P)=\det(I-E_{11}Q)=0$. A routine computation shows that $\det(I-E_{22}P)=1-p_{22}$ and $\det(I-E_{11}Q)=1-q_{11}$. Hence, we have $p_{22}=1=q_{11}$. Since $\|P\|=\|Q\|=1$, we must have $p_{12}=p_{21}=q_{12}=q_{21}=0$. Consequently, we have $P=\diag(p_{11}, 1)$ and $Q=\diag(1, q_{22})$. If $P$ and $Q$ are linearly independent in $M_2(\C)$, then $\text{span}\{P, Q\}=E_\diag$. Thus, $E_\diag \subseteq E_U$ and by Proposition \ref{prop_304}, $\mu_{E_U} \ne f_t$, which is a contradiction to \eqref{eqn_muEUII}. Hence, $P$ and $Q$ are linearly dependent matrices. For $W=(w_{ij}) \in M_2(\C)$, we have
	\[
	N^*W=\begin{pmatrix}
		1 & 0\\
		\overline{x} & \overline{y} 
	\end{pmatrix}\begin{pmatrix}
		w_{11} & w_{12}\\
		w_{21} & w_{22}
	\end{pmatrix}=\begin{pmatrix}
		w_{11} & *\\
		* & \overline{x}w_{12}+\overline{y}w_{22} 
	\end{pmatrix} \quad \text{and so,} \quad \text{tr}(N^*W)=w_{11}+\overline{x}w_{12}+\overline{y}w_{22}.
	\] 
	Thus, $\text{tr}(N^*P)=0$ gives $p_{11}=-\overline{y}$. We now find a basis of $E_U$ containing $P$. Note that
	\begin{align*}
		E_U=\{W \in M_2(\C) : \text{tr}(N^*W) =0\}
		&=\{(w_{ij}) \in M_2(\C): w_{11}+\overline{x}w_{12}+\overline{y}w_{22}=0 \}\\
		&=\left\{\begin{pmatrix}
			-\overline{x}w_{12}-\overline{y}w_{22} & w_{12}\\
			w_{21} & w_{22}
		\end{pmatrix} : w_{12}, w_{21}, w_{22} \in \C\right\}\\
		&=\text{span}\left\{\begin{pmatrix}
			-\overline{x} & 1\\
			0 & 0
		\end{pmatrix}, \begin{pmatrix}
			-\overline{y} & 0\\
			0 & 1
		\end{pmatrix}, \begin{pmatrix}
			0 & 0\\
			1 & 0
		\end{pmatrix}\right\}.
	\end{align*}
	For any $W=\begin{pmatrix}
		-\overline{x}w_{12}-\overline{y}w_{22} & w_{12}\\
		w_{21} & w_{22}
	\end{pmatrix} \in E_U$, we have that $\det(I-E_{12}W)=1-w_{21}$.  
	Then
	\begin{align*}
	&	1 \leq \inf\left\{\left\|\begin{pmatrix}
			-\overline{x}w_{12}-\overline{y}w_{22} & w_{12}\\
			1 & w_{22}
		\end{pmatrix}\right\| : w_{12}, w_{22} \in \C \right\} \leq \left\|\begin{pmatrix}
			0 & 0 \\
			1 & 0
		\end{pmatrix}\right\| =1 \quad \text{and so,}
\\
&	\inf\{\|W\|: W \in E_U, \det(I-E_{12}W)=0\}
	=\inf\left\{\left\|\begin{pmatrix}
		-\overline{x}w_{12}-\overline{y}w_{22} & w_{12}\\
		1 & w_{22}
	\end{pmatrix}\right\| : w_{12}, w_{22} \in \C \right\}=1.
\end{align*}
So, $\mu_{E_U}(E_{12})=1$, which is not possible due to \eqref{eqn_muEUII} as $f_t(E_{12})=1-t\slash 2$. In either case, we obtain a contradiction. Hence, the desired conclusion follows.
\end{proof}

We now generalize the above theorem showing in fact that no convex combination of the spectral radius, numerical radius and operator norm gives a structured singular value except for the spectral radius and the operator norm. The arguments presented here are similar to that of Theorem \ref{thm_numrigidI}.

\begin{thm}
	Let $t=(t_1, t_2) \in [0, 1] \times [0, 1]$ with $t_1+t_2 \leq 1$ and $(t_1, t_2) \notin \{(0, 0), (1, 0)\}$. If $h_t: M_2(\C) \to [0, \infty)$ is the function given by 
	\[
	h_t(A)=t_1r(A)+t_2w(A)+(1-t_1-t_2)\|A\|, 
	\]
	then there does not exist a linear subspace $E$ of $M_2(\C)$ such that $\mu_E(A)=h_t(A)$ for all $A \in M_2(\C)$.
\end{thm}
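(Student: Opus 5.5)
The plan is to repeat the proof of Theorem \ref{thm_numrigidI} with $f_t$ replaced by $h_t$. First I would isolate which properties of $f_t$ that argument, and the argument of Proposition \ref{prop_304}, actually use. There are four: (a) $f_t(A)>0$ for every non-zero $A$; (b) $f_t(U^*AU)=f_t(A)$ for every unitary $U$; (c) $f_t(E_{11})=f_t(E_{22})=1$; and (d) $0<f_t(E_{12})=f_t(E_{21})<1$. So the first step is to verify (a)--(d) for $h_t$.

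Unitary invariance (b) is clear, since $r$, $w$ and $\|\cdot\|$ are each unitarily invariant. For (c), note that $r(E_{ii})=w(E_{ii})=\|E_{ii}\|=1$, so $h_t(E_{ii})=1$. For (d), we have $r(E_{12})=0$, $w(E_{12})=1/2$ and $\|E_{12}\|=1$, so
\[
h_t(E_{12})=h_t(E_{21})=1-t_1-\tfrac{t_2}{2}.
\]
This is $<1$ because $(t_1,t_2)\neq(0,0)$, and it is $>0$ because $t_1+t_2\le 1$ and $(t_1,t_2)\ne(1,0)$. For (a), if $t_2>0$ then $h_t(A)\ge t_2 w(A)>0$ for $A\neq 0$. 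If $t_2=0$, then $t_1<1$ (as $(t_1,t_2)\ne(1,0)$), so $h_t(A)\ge (1-t_1)\|A\|>0$. I note that $h_t$ need not be a norm, since $r$ is not subadditive. However, Corollary \ref{cor_303} is really used only through Lemmas \ref{lem_301} and \ref{lem_302}, which produce a non-zero rank one $A$ with $\mu_E(A)=0$, and positivity (a) is all that is needed to contradict that.

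Next I would run the dimension count. Suppose $\mu_E=h_t$. If $\dim E\le 2$, then $\dim E_{\hp}\ge 2$, so Lemmas \ref{lem_301} and \ref{lem_302} give a rank one $A$ with $\mu_E(A)=0<h_t(A)$, a contradiction. If $\dim E=4$, then $\mu_E=\|\cdot\|$, yet $h_t(E_{12})<1=\|E_{12}\|$, again a contradiction. So $\dim E=3$ and $E_{\hp}=\text{span}\{M\}$. If $M$ has rank one, Lemma \ref{lem_302} together with (a) gives a contradiction. Otherwise I triangularize $M$ unitarily exactly as in Theorem \ref{thm_numrigidI}, and use (b) to get $\mu_{E_U}=h_t$ with $(E_U)_{\hp}=\text{span}\{N\}$, where $N=\begin{pmatrix}1&x\\0&y\end{pmatrix}$.

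By (c) and compactness there exist $P=\diag(p_{11},1)$ and $Q=\diag(1,q_{22})$ in $E_U$ of norm one. If $P$ and $Q$ are linearly dependent, the same computation as before shows $E_U\ni E_{21}$ and gives $\mu_{E_U}(E_{12})=1$. This contradicts (d).

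If $P$ and $Q$ are linearly independent, then $E_{\diag}\subseteq E_U$, and I need the analogue of Proposition \ref{prop_304} for $h_t$. Its four cases go through verbatim: in each case $\mu_{E_U}$ evaluated at $E_{12}$ or at $E_{21}$ equals either $0$ or $1$, and both values are excluded by (d). I expect this step to be the main obstacle, only in the sense that Proposition \ref{prop_304} is stated for $f_t$. I would handle it by restating that proof for an arbitrary function satisfying (a)--(d), and checking that the case $\dim E=4$ there is also dismissed by $h_t(E_{12})<1$.
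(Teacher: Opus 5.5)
Your proposal is correct and follows the paper's proof essentially step for step. You use positivity of $h_t$ with Lemmas \ref{lem_301} and \ref{lem_302} to force $\dim E=3$, unitary invariance to make the annihilator upper triangular, then the $P,Q$ dichotomy and the four-case analysis from Proposition \ref{prop_304}. The differences are minor: in Cases 3 and 4 you conclude at once from $\mu_{E_U}(E_{12})=1$ (resp.\ $\mu_{E_U}(E_{21})=1$) against $h_t(E_{12})=h_t(E_{21})<1$, whereas the paper detours through $|x|=1$ and Example \ref{cor_Munitary}; and in Cases 1 and 2 you use positivity rather than the equality $h_t(E_{12})=h_t(E_{21})$.
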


\begin{proof}
	First note that for non-zero $A \in M_2(\C)$, we have $h_t(A)>0$, otherwise $t_1r(A)=t_2w(A)=(1-t_1-t_2)\|A\|=0$. In this case, $(t_1, t_2)=(1, 0)$, which is not possible. Let if possible, $h_t=\|.\|$, then $h_t(E_{12})=1-t_1-t_2\slash 2=\|E_{12}\|=1$. Thus, $(t_1, t_2)=(0, 0)$, which is again a contradiction.
	
	\medskip 
	
	Assume on the contrary that there exists a linear subspace $E \subseteq M_2(\C)$ such that $\mu_E(A)=h_t(A)$ for all $A \in M_2(\C)$. If $\dim(E) \leq 2$, then $\dim(E_\hp)= 4-\dim(E) \geq 2$. By Lemma \ref{lem_301}, $E_\hp$ has a rank one matrix and by Lemma \ref{lem_302}, there is a rank one matrix $A \in M_2(\C)$ with $\mu_E(A)=0$. Thus, $h_t(A)=0$ for a non-zero $A$ in $M_2(\C)$, which is a contradiction. If $\dim(E)=4$, then $\mu_E=h_t=\|.\|$, which is not possible as discussed above. Thus, $\dim(E)=3$. Let $U$ be a unitary in $M_2(\C)$. Following the similar arguments as in Theorem \ref{thm_numrigidI}, we have that 
	\begin{align}\label{eqn_muEUV}
		\mu_{E_U}(A)=\mu_E(U^*AU)=h_t(U^*AU)=h_t(A)=\mu_E(A)
	\end{align}
	for all $A \in M_2(\C)$ and unitary $U \in M_2(\C)$, where $E_U=\{Y \in M_2(\C) : U^*YU \in E\}$.
	As $\dim(E)=3$, we can write $E_\hp=\text{span}\{M\}$. If $M$ has rank one, then by Lemma \ref{lem_302}, there is a rank one matrix $A \in M_2(\C)$ with $\mu_E(A)=0$, which is not possible as $h_t(A)>0$. Now assume that $M$ has rank $2$. Then $M$ is invertible and we choose a unitary $U$ such that 
	\[
	UMU^*=\begin{pmatrix}
		\lm_1 & m_{12}\\
		0 & \lm_2 
	\end{pmatrix} \quad \text{and so,} \quad U\left(\lm_1^{-1}M\right)U^*=\begin{pmatrix}
		1 & x \\
		0 & y
	\end{pmatrix},
	\]
	where $x=\lm_1^{-1}m_{12}$ and $y=\lm_1^{-1}\lm_2$. Set $M_1=\lm_1^{-1}M$ and $N=UM_1U^*$. Clearly, $M_1 \in E_\hp$ and $E=\{X \in M_2(\C) : tr(M_1^*X)=0\}$. Also,
	$
	E_U=\{Y \in M_2(\C) : \text{tr}(M_1^*U^*YU) =0\}=\{Y \in M_2(\C) : \text{tr}(N^*Y) =0\}
	$
	and thus, $(E_U)_\hp=\text{span}\{N\}$. Moreover, we have by \eqref{eqn_muEUV} that 
	\begin{align}\label{eqn_muEUVI}
		\mu_{E_U}(A)=h_t(A) \quad \text{for all $A \in M_2(\C)$.}
	\end{align} 
	In particular, we must have 
	\begin{align}\label{eqn_E12_E21}
		\mu_{E_U}(E_{ii})=h_t(E_{ii})=1 \quad \text{and} \quad 	\mu_{E_U}(E_{12})=h_t(E_{12})=1-t_1-\frac{1}{2}t_2=h_t(E_{21})=\mu_{E_U}(E_{12})
	\end{align} 
	for $i=1,2$. Hence, $\inf\{\|X\|: X \in E_U, \det(I-E_{ii}X)=0\}=1$ for $i=1, 2$. Repeating the same arguments as in Theorem \ref{thm_numrigidI}, one can find $P=\diag(p_{11}, 1)$ and  $Q=\diag(1, q_{22})$ in $E_U$.	If $P, Q$ are linearly dependent, then it follows from the proof of Theorem \ref{thm_numrigidI} that $\mu_{E_U}(E_{12})=1$. By \eqref{eqn_muEUVI}, $h_t(E_{12})=1-t_1-t_2\slash 2=1$ and thus, $(t_1, t_2)=(0, 0)$ which is again a contradiction. Now suppose $P, Q$ are linearly independent. Then  $\text{span}\{P, Q\}=E_\diag \subseteq E_U$. In this case, we can write 
	\[
	E_U=\text{span}\left\{\begin{pmatrix}
		1 & 0\\
		0 & 0
	\end{pmatrix}, \begin{pmatrix}
		0 & 0\\
		0 & 1
	\end{pmatrix}, \begin{pmatrix}
		0 & \alpha\\
		\beta & 0
	\end{pmatrix}\right\} \ \ \text{for some $\alpha, \beta \in \C$}.
	\]
 We follow the similar arguments as in Proposition \ref{prop_304} depending on the values of $\alpha$ and $\beta$. 
	
	\medskip 
	
	\noindent \textbf{Case 1.} Let $\alpha=0$. Then $\beta \neq 0$, otherwise $\dim(E_U)=2$. Evidently, $E_U = \mathrm{span}\{E_{11}, E_{22}, E_{21}\}$. For $X=\begin{pmatrix}
		a & 0 \\
		c & b
	\end{pmatrix} \in E_U$, we have that $\det(I - E_{21}X) = 1$ and $\det(I-E_{12}X)=1-c$. Then  
	\[
	\inf\{\|X\|: X \in E_U, \det(I-E_{12}X)=0\}=\inf\left\{\left\|\begin{pmatrix}
		a & 0 \\
		1 & b
	\end{pmatrix}\right\|: a, b \in \C\right\}=1. 
	\] 
	Thus, $\mu_{E_U}(E_{12})=1$ and $\mu_{E_U}(E_{21})=0$, which is a contradiction to \eqref{eqn_E12_E21}.	
	\medskip 
	
	\noindent \textbf{Case 2.} Let $\beta=0$. Then $\alpha \neq 0$, otherwise $\dim(E_U)=2$. Evidently, $E_U = \mathrm{span}\{E_{11}, E_{22}, E_{12}\}$. For $X \in E_U$, $\det(I - E_{12}X) = 1$. Also, for $X=\begin{pmatrix}
		a & c \\
		0 & b
	\end{pmatrix} \in E_U$, $\det(I-E_{21}X)=1-c$. Then  
	\[
	\inf\{\|X\|: X \in E_U, \det(I-E_{21}X)=0\}=\inf\left\{\left\|\begin{pmatrix}
		a & 1 \\
		0 & b
	\end{pmatrix}\right\|: a, b \in \C\right\}=1. 
	\] 
	Thus, $\mu_{E_U}(E_{12})=0$ and $\mu_{E_U}(E_{21})=1$, which is again a contradiction to \eqref{eqn_E12_E21}.		
	\medskip 
	
	\noindent \textbf{Case 3.} Let $\alpha, \beta \ne 0$ with $|\alpha| \leq |\beta|$. We can simply write 
	\[
	E_U = \text{span}\left\{E_{11}, E_{22}, \begin{pmatrix}
		0 & x \\
		1 & 0
	\end{pmatrix}\right\}=\left\{\begin{pmatrix}
		a & \gamma \ x \\
		\gamma & b
	\end{pmatrix} : a, b, \gamma \in \C \right\}, \ \ \text{where $x=\alpha \slash \beta$}.
	\]
 For every $X=\begin{pmatrix}
		a & \gamma \ x \\
		\gamma & b
	\end{pmatrix} \in E_U$, $\det(I-E_{12}X)=1-\gamma$ and $\det(I-E_{21}X)=1-\gamma x$. Then
	\[
	1 \leq \inf\{\|X\|: X \in E_U, \det(I-E_{12}X)=0\}=\inf\left\{\left\|\begin{pmatrix}
		a & x \\
		1 & b
	\end{pmatrix}\right\| : a, b \in \C \right\} \leq \left\|\begin{pmatrix}
		0 & x \\
		1 & 0
	\end{pmatrix}\right\| \leq 1,
	\]
	where the last inequality holds as $|x| \leq 1$. Thus, $\mu_{E_U}(E_{12})=1$. Furthermore, we have 
	\[
	\frac{1}{|x|} \leq \inf\{\|X\|: X \in E_U, \det(I-E_{21}X)=0\}=\inf\left\{\left\|\begin{pmatrix}
		a & 1 \\
		1\slash x & b
	\end{pmatrix}\right\| : a, b \in \C \right\} \leq \left\|\begin{pmatrix}
		0 & 1 \\
		1\slash x & 0
	\end{pmatrix}\right\| = \frac{1}{|x|},
	\]
	where the last equality follows since $0<|x| \leq 1$. Thus, $\mu_{E_U}(E_{21})=|x|$ and by \eqref{eqn_E12_E21}, $|x|=1$. Some routine computation shows that
	\[
	(E_U)_\hp=\{Y \in M_2(\C): \text{tr}(X^*Y)=0 \ \text{for all} \ X \in E \}=\text{span}\left\{V\right\}, \quad \text{where} \ \ V=\begin{pmatrix}
		0 & 1 \\
		-\overline{x} & 0
	\end{pmatrix}.
	\]
	Since $|x|=1$, we have that $V$ is a unitary. By definition, $E_U=\{X \in M_2(\C) : tr(V^*X)=0\}$, and by Example \ref{cor_Munitary}, $\mu_{E_U}=\|.\|$. Thus, $h_t=\|.\|$, which is not possible as shown earlier. 
	\medskip 
	
	\noindent \textbf{Case 4.} Let $\alpha, \beta \ne 0$ with $|\beta| \leq |\alpha|$. We can simply write 
	\[
	E_U = \text{span}\left\{E_{11}, E_{22}, \begin{pmatrix}
		0 & 1 \\
		y & 0
	\end{pmatrix}\right\}=\left\{\begin{pmatrix}
		a & \gamma  \\
		\gamma \ y & b
	\end{pmatrix} : a, b, \gamma \in \C \right\},
	\]
	where $y=\beta\slash \alpha$. For every $X=\begin{pmatrix}
		a & \gamma  \\
		\gamma \ y & b
	\end{pmatrix} \in E_U$, we have that $\det(I-E_{21}X)=1-\gamma$. Then
	\[
	1 \leq \inf\{\|X\|: X \in E_U, \det(I-E_{21}X)=0\}=\inf\left\{\left\|\begin{pmatrix}
		a & 1 \\
		y & b
	\end{pmatrix}\right\| : a, b \in \C \right\} \leq \left\|\begin{pmatrix}
		0 & 1 \\
		y & 0
	\end{pmatrix}\right\| \leq 1.
	\]
	Thus, $\mu_{E_U}(E_{21})=1$. Also,  $\det(I-E_{12}X)=1-\gamma y$ and so, we have
	\[
	\frac{1}{|y|} \leq \inf\{\|X\|: X \in E_U, \det(I-E_{12}X)=0\}=\inf\left\{\left\|\begin{pmatrix}
		a & 1\slash y \\
		1 & b
	\end{pmatrix}\right\| : a, b \in \C \right\} \leq \left\|\begin{pmatrix}
		0 & 1\slash y \\
		1 & 0
	\end{pmatrix}\right\| = \frac{1}{|y|}.
	\]
	Thus, $\mu_{E_U}(E_{12})=|y|$ and by \eqref{eqn_E12_E21}, $|y|=1$. A simple calculation shows that
	\[
	(E_U)_\hp=\{Y \in M_2(\C): \text{tr}(X^*Y)=0 \ \text{for all} \ X \in E_U \}=\text{span}\left\{V_0\right\}, \quad \text{where} \ \ V_0=\begin{pmatrix}
		0 & -\overline{y}\\
		1 & 0
	\end{pmatrix}.
	\]
	Since $|y|=1$, we have that $V_0$ is a unitary. By definition, $E_U=\{X \in M_2(\C) : tr(V_0^*X)=0\}$. We have by Example \ref{cor_Munitary} that $\mu_{E_U}=\|.\|$. Thus, $h_t=\|.\|$, which is not possible as discussed earlier. In either case, we have a contradiction and so, the desired conclusion follows.
\end{proof} 

\vspace{0.3cm}

\noindent \textbf{Funding.} The first named author is supported in part by Core Research Grant with Award No. CRG/2023/005223 from Anusandhan National Research Foundation (ANRF) of Govt. of India. The second named author is supported via the IIT Bombay RDF Grant of the first named
author with Project Code RI/0115-10001427.


\begin{thebibliography}{9}
	
	\bibitem{Abouhajar}
	A. A. Abouhajar, M. C. White, N. J. Young, \textit{A Schwarz lemma for a domain related to $\mu$-synthesis}, J. Geom. Anal., 17 (2007), 717 -- 750. \smallskip 
	

\bibitem{AglerIV}	
J. Agler, Z. A. Lykova, N. J. Young, \textit{The complex geometry of a domain related to $\mu$-synthesis}, J. Math. Anal. Appl., 422 (2015), 508 -- 543.\smallskip 

	
	\bibitem{AglerYoung}	
	J. Agler, N. J. Young, \textit{A commutant lifting theorem for a domain in $\C^2$ and spectral interpolation}, J. Funct. Anal., 161 (1999), 452 -- 477. \smallskip
	
	 \bibitem{AglerI}
J. Agler, N. J. Young, \textit{The two-point spectral Nevanlinna-Pick problem}, Integral Equations Operator Theory, 37 (2000), 375 -- 385.\smallskip

\bibitem{Agler2004}
J. Agler, N. J. Young, \textit{The two-by-two spectral Nevanlinna-Pick problem}, Trans. Amer. Math. Soc., 356 (2004), 573 -- 585. \smallskip 
	
	\bibitem{AglerIII}
	J. Agler, N. J. Young, \textit{The hyperbolic geometry of the symmetrized bidisc}, J. Geom. Anal., 14 (2004), 375 -- 403. \smallskip 
	
	
	\bibitem{Agler2008}
	J. Agler, N. J. Young, \textit{The magic functions and automorphisms of a domain}, Complex Anal. Oper. Theory, 2 (2008), 383 -- 404. \smallskip 	
	
	
	\bibitem{Alsalhi}
	O. M. O. Alsalhi, Z. A. Lykova, \textit{Rational tetra-inner functions and the special variety of the tetrablock}, J. Math. Anal. Appl., 506 (2022), 125534, 52 pp.
	
	 \smallskip
	 
	 \bibitem{Ball}
J. A. Ball, I. Gohberg, L. Rodman, \textit{Interpolation of rational matrix functions}, OT45, Birkhauser Verlag, 1990. \smallskip 
	 
	 \bibitem{Bercovici}
H. Bercovici, C. Foias, A. Tannenbaum, \textit{A spectral commutant lifting theorem}, Trans. Amer. Math. Soc., 325 (1991), 741 -- 763. 

\smallskip

	\bibitem{Tirtha}
T. Bhattacharyya, \textit{The tetrablock as a spectral set}, Indiana Univ. Math. J., 63 (2014), 1601 -- 1629. \smallskip 

	
	\bibitem{Tirtha_Pal}
	T. Bhattacharyya, S. Pal, S. S. Roy, \textit{Dilations of $\Gamma$-contractions by solving operator equations}, Adv. Math., 230 (2012), 577 -- 606. \smallskip 
	
		\bibitem{Hexa_Su}
	E. Bi, Z. Shaaban, G. Su, \textit{Rigidity of proper holomorphic self-mappings of the hexablock}, arXiv: 2507.16176.\smallskip
	
	\bibitem{Hexablock} 
	I. Biswas, S. Pal, N. Tomar, \textit{The Hexablock: a domain associated with the $\mu$-synthesis in $M_2(\mathbb{C})$}, arXiv: 2506.15149. \smallskip 
	
	
	\bibitem{Costara2004}		
	C. Costara, \textit{The symmetrized bidisc and Lempert's theorem}, Bull. London Math. Soc., 36 (2004), 656 -- 662. \smallskip
	
	\bibitem{Costara2005_II}
C. Costara, \textit{On the spectral Nevanlinna-Pick problem}, Studia Math., 170 (2005), 23 -- 55.\smallskip 
	
	\bibitem{Costara2005}		
	C. Costara, \textit{The $2 \times 2$ spectral Nevanlinna-Pick problem}, J. London Math. Soc., 71 (2005), 684 -- 702. \smallskip 
	
	\bibitem{Doyle}
	J. Doyle, \textit{Analysis of feedback systems with structured uncertainties}, IEE Proc. Control Theory Appl., 129 (1982), 242 -- 250. \smallskip 
	
	\bibitem{DoyleII}
	J. Doyle, G. Stein, \textit{Multivariable feedback design: concepts for a classical\slash modern synthesis}, IEEE Transactions on Automatic Control, 26 (1981), 4 -- 16. \smallskip 
	
	\bibitem{Foias_Frazho}
C. Foias, A. E. Frazho, \textit{The commutant lifting approach to interpolation problem}, Birkh\"{a}user, Berlin, 1990. \smallskip
	
		\bibitem{JindalII}
	A. Jindal, P. Kumar, \textit{Operator theory on the pentablock},
	J. Math. Anal. Appl., 540 (2024), no. 1, Paper No. 128589, 17 pp.		\smallskip
	
	\bibitem{Kosi}
	L. Kosi\'{n}ski, \textit{Geometry of quasi-circular domains and applications to tetrablock}, Proc. Amer. Math. Soc., 139 (2011), 559 -- 569.\smallskip
	
	
	\bibitem{KosinskiII}
	L. Kosi\'{n}ski, \textit{The group of automorphisms of the pentablock}, Complex Anal. Oper. Theory, 9 (2015), 1349 -- 1359. \smallskip
	
	
	\bibitem{zwonek1} 
	L. Kosi\'nski, W. Zwonek, \textit{Nevanlinna-Pick problem and uniqueness of left inverses in convex domains,
		symmetrized bidisc and tetrablock}, J. Geom. Anal., 26 (2016), 1863 -- 1890. \smallskip

	\bibitem{Talha}
	 T. Mushtaq, P. Seiler, M. S. Hemati, \textit{Exact solution for the rank-one structured singular value with repeated complex full-block uncertainty}, Automatica J. IFAC, 167 (2024), Paper No. 111717, 4 pp.
	 
	  \smallskip 
	  
	 \bibitem{NikolovIII}
	N. Nikolov, P. Pflug, P. J. Thomas, \textit{Spectral Nevanlinna-Pick and Carath\'{e}odory-Fej\'{e}r problems for $n \leq 3$}, Indiana Univ. Math. J., 60 (2011), 883 -- 893. \smallskip
	
	\bibitem{Packard}
	A. Packard, J. Doyle, \textit{The complex structured singular value},  Automatica J. IFAC, 29 (1993), 71 -- 109. \smallskip	
	
	\bibitem{sourav14} 
	S. Pal, \textit{From Stinespring dilation to Sz.-Nagy
		dilation on the symmetrized bidisc and operator models},
	New York J. Math., 20 (2014), 645 -- 664. \smallskip	 
	 
	\bibitem{pal-shalit} 
	S. Pal, O. M. Shalit, \textit{Spectral sets and distinguished varieties in the symmetrized bidisc}, J.	Funct. Anal., 266 (2014), 5779 -- 5800. \smallskip	
	
	\bibitem{PalN}
	S. Pal, N. Tomar, \textit{Operators associated with the pentablock and their relations with biball and symmetrized bidisc}, arXiv: 2309.15080. \smallskip
	
	\bibitem{Pal_Hexa}
	S. Pal, N. Tomar, \textit{Operators associated with the hexablock}, arXiv: 2507.14589.
	\smallskip
	
		\bibitem{PZ}
	P. Pflug,  W. Zwonek, \textit{Description of all complex geodesics in the symmetrized bidisc}, Bull. London Math. Soc., 37 (2005), 575 -- 584.\smallskip

	\bibitem{Guicong}
G. Su, \textit{Geometric properties of the pentablock}, Complex Anal. Oper. Theory, 14 (2020), Paper No. 44, 14 pp.
\smallskip

	
	\bibitem{GuicongII}
	G. Su, Z. Tu, L. Wang, \textit{Rigidity of proper holomorphic self-mappings of the pentablock}, J. Math. Anal. Appl., 424 (2015), 460 -- 469.
	\smallskip
	
			
	
	\bibitem{Young}
	N. J. Young, \textit{The automorphism group of the tetrablock}, J. Lond. Math. Soc., 77 (2008), 757 -- 770.\smallskip
	
	\bibitem{Zwonek}
	W. Zwonek, \textit{Geometric properties of the tetrablock}, Arch. Math., 100 (2013), 159 -- 165.
\end{thebibliography}
\end{document}